\documentclass{article}
\title{Geometric random graphs on circles}
\date{May 2021}
\author{Omer Angel \and Yinon Spinka}

\AtEndDocument{
  \bigskip\noindent
%  \small
  \textsc{Omer Angel, Yinon Spinka} \\
  Department of Mathematics, \\
  University of British Columbia \\
  \textit{Email:} \texttt{\{angel,yinon\}@math.ubc.ca}
}

% Packages
\usepackage{amsmath,amsthm,amssymb}
\usepackage{enumitem, graphicx, color}
\usepackage[margin=3cm]{geometry}

% % Text area size
% \setlength{\textwidth}{6.5in}
% \setlength{\textheight}{9in}

\usepackage[nameinlink]{cleveref}
  \crefname{theorem}{Theorem}{Theorems}
  \crefname{thm}{Theorem}{Theorems}
  \crefname{mainthm}{Theorem}{Theorems}
  \crefname{lemma}{Lemma}{Lemmas}
  \crefname{lem}{Lemma}{Lemmas}
  \crefname{remark}{Remark}{Remarks}
  \crefname{prop}{Proposition}{Propositions}
  \crefname{defn}{Definition}{Definitions}
  \crefname{corollary}{Corollary}{Corollaries}
  \crefname{cor}{Corollary}{Corollaries}
  \crefname{section}{Section}{Sections}
  \crefname{figure}{Figure}{Figures}

% Definitions
\newtheorem{thm}{Theorem}[section]
\newtheorem{lemma}[thm]{Lemma}
\newtheorem{prop}[thm]{Proposition}

\newtheorem{cor}[thm]{Corollary}

\theoremstyle{definition}
\newtheorem{remark}[thm]{Remark}

%\newcommand{\note}[1]{{\textcolor{red}{[note: #1]}}}

% % Abbreviations
\newcommand{\cG}{\mathcal{G}}
\newcommand{\N}{\mathbb{N}}
\newcommand{\Q}{\mathbb{Q}}
\newcommand{\R}{\mathbb{R}}
\newcommand{\bbS}{\mathbb{S}}
\newcommand{\Z}{\mathbb{Z}}
 \renewcommand{\Pr}{\mathbb{P}}
\newcommand{\1}{\mathbf{1}}
\newcommand{\eps}{\varepsilon}

\DeclareMathOperator\dist{dist}

\newcommand{\iid}{i.i.d.}
\newcommand{\gec}{g.e.c.}

%%%%%%%%%%%%%%%%%%%%%%%%%%%%%%%%%%%%%%%%%%%%%%%%%%%%%%%%%%%%%%%%%%
% Title page

\begin{document}

\maketitle

\begin{abstract}
  Given a dense countable set in a metric space, the infinite random geometric graph is the random graph with the given vertex set and where any two points at distance less than 1 are connected, independently, with some fixed probability.
  It has been observed by Bonato and Janssen that in some, but not all, such settings, the resulting graph does not depend on the random choices, in the sense that it is almost surely isomorphic to a fixed graph.
  While this notion makes sense in the general context of metric spaces,  previous work has been restricted to sets in Banach spaces.
  We study the case when the underlying metric space is a circle of circumference $L$, and find a surprising dependency of behaviour on the rationality of $L$.
\end{abstract}

\begin{figure}[h]
  \centering
  \vspace{20pt}
  \includegraphics[scale=1.5]{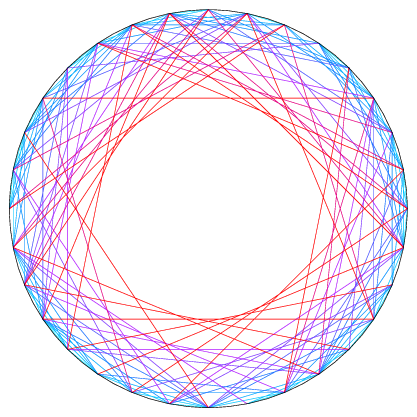}
  \caption{A random geometric graph in $\bbS_3$ with $32$ equally spaced vertices.}
  \label{fig:graph}
\end{figure}

\section{Introduction and main results}
\label{sec:introduction}

Erd{\H{o}}s and R{\'e}nyi initiated the systematic study of the random graph on $n$ vertices, in which any two vertices are connected with probability $p$, independently.
In 1964, Erd{\H{o}}s and R{\'e}nyi~\cite{erdHos1963asymmetric} showed that the infinite version of this random graph, where there are countably many vertices and any two are independently connected with probability $p$, is very different from its finite counterpart.
Specifically, there exists a fixed graph $R$ such that the infinite random graph is almost surely isomorphic to $R$.
Moreover, $R$ is the same for all $p\in(0,1)$.
Rado~\cite{rado1964universal} gave a concrete and concise description of $R$.
The graph $R$ (or, more precisely, its isomorphism type) is therefore sometimes called the \textbf{Rado graph}.
The Rado graph has several nice properties.
One such property, which in fact characterizes the graph, is that it is \textbf{existentially closed}: for any disjoint finite sets of vertices $A$ and $B$, there exists a vertex which is adjacent to every vertex in $A$ and to no vertex in $B$.
We refer the reader to~\cite{cameron1997random} for more information on the Rado graph and its properties.

The Erd\H{o}s--R\'enyi random graph, both its finite version and its infinite version, are non-geometric models -- they are random subgraphs of a complete graph.
Random geometric graphs have been studied extensively.
In these graphs, the vertices are embedded in some previously defined metric space $(X,d)$, and the probability of a connection depends on the distance between the vertices.
If the set of vertices is locally finite, the structure of the resulting graph can be expected to mirror that of the underlying metric space $X$.
However, if the set is dense in $X$, a very different story unfolds.
Bonato and Janssen~\cite{bonato2011infinite} initiated the study of random geometric graphs in which any two points of a countable metric space that are at distance less than one from each other are independently connected with probability $p$.
They introduced a property of these graphs called \textbf{geometric existentially closed} (\gec), analogous to the existentially closed property of the Rado graph.
A graph $G$ whose vertex set is a metric space $(V,d)$ is said to satisfy \gec\ if, for any vertex $s \in V$, any disjoint finite sets of vertices $A,B \subset V$ which are contained in the open unit-ball around $s$, and any $\epsilon>0$, there exists a vertex $v \in V \setminus (A \cup B)$ which is adjacent to every vertex in $A$, is not adjacent to any vertex in $B$, and satisfies that $d(v,s) < \epsilon$.
They then showed that, for any countable metric space in which every point is an accumulation point, the corresponding geometric random graph almost surely satisfies the \gec\ property.

A countable metric space is said to be \textbf{Rado} if the infinite random geometric graph has a unique isomorphism type, i.e., if two independent samples of the geometric random graph, possibly with distinct $p$, are almost surely isomorphic.
Such a space is called \textbf{strongly non-Rado} if two such samples are almost surely non-isomorphic.
When referring to these terms in the context of a countable subset of a metric space, we are actually referring to the metric space induced on that set.
Thus, if $S$ is a countable subset of a metric space $(V,d)$, then we say that $S$ is Rado (strongly non-Rado) if the metric space induced on $S$, namely $(S,d|_{S \times S})$, is Rado (strongly non-Rado).
We informally say that a metric space has the \textbf{Rado property} if a typical (e.g., generic or random) dense countable subset of it is a Rado set.
To make this precise, if the metric space has some probability measure, one can consider the set $S$ given by an infinite \iid\ sequence of samples from the measure.
The basic question then arises: which metric spaces have the Rado property?
For example, if the space has diameter less than $1$, then any two points are connected with probability $p$ and the geometric random graph is nothing but the original Rado graph.

In the case of the metric space $(\R,|\cdot|)$, Bonato and Janssen~\cite{bonato2011infinite} prove that the Rado property holds: there exists a fixed graph, denoted $GR(\R)$, such that for a generic dense countable subset of $\R$ (more precisely, for any dense countable set having no two points at an integer distance apart) and for any $p$, the random graph is almost surely isomorphic to $GR(\R)$.
They also extend this to the case of the metric space $(\R^d,\ell_\infty)$.
Here too, there is a fixed graph $GR(\R^d)$ for each $d \ge 1$.
The graphs $R, GR(\R), GR(\R^2), \dots$ are all non-isomorphic to one another.
In contrast, they show that this is not true for the Euclidean metric space $(\R^d,\ell_2)$, where every dense countable set is strongly non-Rado \cite{bonato2011infinite}, nor for the hexagonal norm on $\R^2$, where a randomly chosen dense countable set is almost surely strongly non-Rado \cite{bonato2012infinite}.
They later showed that many normed spaces fail to have the Rado property \cite{bonato2013properties}, including $(\R^d,\ell_p)$ for any $1<p<\infty$ (and also for $p=1$ when $d \ge 3$).
Balister, Bollob{\'a}s, Gunderson, Leader and Walters \cite{balister2018random} subsequently showed that $(\R^d,\ell_\infty)$ are the unique finite-dimensional normed spaces for which the Rado property holds.
In fact, in any other normed space, a generic dense countable subset is strongly non-Rado.

Certain infinite-dimensional normed spaces (all of which are Banach spaces) have also been considered.
Bonato, Janssen and Quas \cite{bonato2019geometric} studied the space $\mathfrak{c}$ of real convergent sequences with the sup norm and the subspace $\mathfrak{c}_0$ of sequences converging to zero, and showed that both have the Rado property.
They also showed that Banach spaces can be recovered from the random graph in the sense that, if two Banach spaces yield isomorphic random graphs, then the two spaces are isometrically isomorphic.
In a subsequent paper \cite{bonato2018geometric}, the same authors considered the space $C[0,1]$ of continuous functions with sup norm, and proved that the Rado property holds for certain subspaces, including the spaces of piece-wise linear paths, polynomials, and Brownian motion paths.

Though the notion of an infinite random geometric graph is defined in the general context of (countable) metric spaces, we are unaware of any previous works which deal with metric spaces other than normed spaces. One of the goals of this paper is to investigate the random graph in such spaces, and we do so through the example of the cycle.

\medbreak

Let $L>0$ and consider $\bbS_L := \R / L\Z$, the circle of circumference $L$ with its intrinsic metric (so that, for example, the diameter of the metric space is $L/2$).
Let $S$ be a dense countable subset of $\bbS_L$.
Let $G_{L,S}$ be the unit-distance graph on $S$, i.e., the graph whose vertex set is $S$ and whose edge set consists of all pairs of points in $S$ whose distance in $\bbS_L$ is less than 1.
Given $p \in (0,1)$, let $G_{L,S,p}$ be a random subgraph of $G_{L,S}$ obtained by retaining each edge of $G_{L,S}$ with probability $p$, independently for different edges.
See \cref{fig:graph} for an example with a finite set $S$.

As usual, we say that two graphs $G$ and $G'$ are isomorphic if there exists a bijection $\varphi$ from the vertex set of $G$ to that of $G'$ such that $\varphi(u)$ and $\varphi(v)$ are adjacent in $G'$ if and only if $u$ and $v$ are adjacent in $G$. In this case, we write $G \cong G'$.
If $L\le 2$, then $G_{L,S,p}$ is easily seen to be isomorphic to the Rado graph $R$. Thus, we henceforth always assume that $L \ge 2$.

Our first result is concerned with distinguishing between the different metric spaces, showing that different values of $L$ produce non-isomorphic graphs, so that one can recover the length $L$ of the circle from (the isomorphism type of) the random graph. When $L=\infty$, it is natural to interpret $\bbS_\infty$ as the metric space $(\R,|\cdot|)$.

\begin{thm}\label{thm:main0}
  For any $L \in [2,\infty]$, any dense countable $S\subset\bbS_L$ and any $p\in(0,1)$, 
  the cycle length $L$ can be recovered almost surely as a measurable function of the graph $G_{L,S,p}$ which is invariant to graph isomorphisms.
\end{thm}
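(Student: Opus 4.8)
The plan is to recover $L$ as a scaling limit of a combinatorial quantity measuring how efficiently one can wind around the circle using short steps. The first step is to extract from $G_{L,S,p}$, in an isomorphism-invariant and $p$-free way, the clean geometric relation $d(u,v)<2$. Indeed, two vertices $u,v$ have infinitely many common neighbors in $G_{L,S,p}$ if and only if the open balls of radius $1$ about $u$ and $v$ overlap, which happens exactly when $d(u,v)<2$: if $d(u,v)<2$ the overlap is a nonempty arc, hence contains infinitely many points of the dense set $S$, each a common neighbor with probability $p^2>0$ independently; if $d(u,v)\ge 2$ then no point lies within distance $1$ of both. Having infinitely many common neighbors is manifestly invariant under isomorphism and independent of $p$, so we recover almost surely the auxiliary graph $H$ on $S$ in which $u\sim v$ iff $d(u,v)<2$. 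I will treat $L\ge 4$ first, so that $H$ is a proper graph and every $H$-edge spans a true distance $<2\le L/2$.

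Next I would recover the cyclic order of $S$ around $\bbS_L$ (up to orientation) from $H$ alone, via metric betweenness. Writing $N_H(x)=\{w:d(w,x)<2\}$, for points $a,b,c$ in a common arc short enough that the intersection of the radius-$2$ balls about $a$ and $c$ is a single arc, one checks by a one-dimensional interval computation that $b$ lies between $a$ and $c$ if and only if $N_H(a)\cap N_H(c)\subseteq N_H(b)$. Since $N_H$ is already recovered, this betweenness relation is an isomorphism invariant; gluing the resulting local linear orders produces the global cyclic order on $S$, the two orientations reflecting the reflection symmetry of $\bbS_L$. With the cyclic order in hand, each $H$-edge (a pair at distance $<2\le L/2$) has a well-defined short-arc direction, so every closed walk $v_0,\dots,v_n=v_0$ in $H$ carries a well-defined integer winding number $\wind\in\Z$, computed combinatorially from the cyclic order as the net number of forward wraps. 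This integer is an isomorphism invariant of the walk.

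I then define $W(k)$ to be the minimal number of edges among closed $H$-walks of winding number $k$. Concatenation gives subadditivity $W(j+k)\le W(j)+W(k)$, so by Fekete the limit $\lim_k W(k)/k=\inf_k W(k)/k$ exists. For the lower bound, the signed short-arc displacements of a winding-$k$ walk sum to $\wind\cdot L=kL$, while each has absolute value $<2$, forcing $n>kL/2$ and hence $W(k)\ge kL/2$. For the matching upper bound, density of $S$ lets me pick, for any $\eps>0$, consecutive points with forward gaps in $(2-\eps,2)$ that wind around $k$ times and close up, yielding a winding-$k$ walk with at most $kL/(2-\eps)+O(1)$ edges. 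Therefore $W(k)/k\to L/2$, and we recover
\[
  L \;=\; 2\lim_{k\to\infty}\frac{W(k)}{k}.
\]
The case $L=\infty$ (the line) admits no winding, so $W(k)=\infty$; this is detected by $W(1)=\infty$. The small cases $2\le L<4$, where $H$ is complete and carries no winding, require a separate and easier treatment using the original threshold-$1$ structure or direct estimates.

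The crux, and the main obstacle, is the second step: robustly recovering betweenness and the global cyclic order purely from $H$. One must guarantee that the ball intersections used are single arcs (which is why only sufficiently close triples are compared and why $L$ is taken not too small), verify that the containment criterion characterizes betweenness on $\bbS_L$ including wrap-around, and check that the local orders glue into a consistent global cyclic order almost surely, uniformly in $p$. A conceptually cleaner packaging of the same content is homological: the clique complex of $H$ is a Vietoris--Rips complex of the dense sample $S$ at scale $2$, which for $L$ large is homotopy equivalent to $\bbS_L\simeq S^1$; then $\wind$ is the class in $H_1\cong\Z$, $W(k)$ is the minimal edge-length of a cycle representing $k$ times the generator, and $H_1=0$ signals $L=\infty$. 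Either way, the essential work and the essential risk lie in turning the topological notion of winding into a genuine graph invariant and then establishing the scaling limit.
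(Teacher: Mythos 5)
Your proposal has a genuine gap, and it sits exactly where you wave it away: the regime of small $L$. For $2 \le L \le 4$ the diameter of $\bbS_L$ is $L/2 \le 2$, so your auxiliary graph $H$ (adjacency iff $\|u-v\|<2$) is the complete graph on $S$ and carries no information whatsoever; there is no cyclic order to recover, no winding, and $W(k)$ is undefined or degenerate. You assert this case admits ``a separate and easier treatment using the original threshold-$1$ structure or direct estimates,'' but this is unsupported and, I would argue, backwards: the threshold-$1$ adjacencies are the random data you cannot read off directly, and distinguishing, say, $L=5/2$ from $L=3$ is precisely the hard part of the problem (indeed, in the paper's later, harder arguments the case $2<L<3$ is consistently the delicate one, requiring its own machinery of ``small'' and ``large'' sets). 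A proof of the theorem as stated must cover all of $[2,\infty]$, so this is not a technicality but a missing idea. There is also a secondary soft spot you yourself flag: your betweenness criterion needs $N_H(a)\cap N_H(c)$ to be a single arc, which forces $\|a-c\| < L-4$; since $L$ is not known in advance, even specifying which triples to compare in an isomorphism-invariant way requires care, and the gluing of local orders into a global cyclic order is asserted rather than proved. So even on $L>4$ the argument is a plausible sketch rather than a proof.

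For contrast, the paper's proof sidesteps all of this with a single density-type invariant that works uniformly for every $L \in [2,\infty]$: define
\[ \alpha(G) := \inf_{\substack{U \subset G\\0<|U|<\infty}} \sup_{v \in G}\, \frac{|N(v) \cap U|}{|U|} ,\]
and show $\alpha(G)=2/L$ for every $G \in \cG_L$. The upper bound uses only the unit-threshold property (each neighborhood lies in an arc of length $2$) together with a nearly equidistributed finite set $U$; the lower bound uses an averaging argument (some arc of length $2$ contains density $\ge 2/L$ of $U$) plus \gec\ to produce a vertex adjacent to all of $U$ in that arc. This invariant is manifestly isomorphism-invariant, needs no notion of order, winding, or homology, and does not degenerate for small $L$: it distinguishes $L=5/2$ from $L=3$ just as easily as it distinguishes $10$ from $100$, and gives $\alpha=0$ at $L=\infty$. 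Your scaling-limit idea $L = 2\lim_k W(k)/k$ is attractive and could likely be made rigorous for $L$ large (the paper develops closely related winding-number machinery in its Section 4, for a different theorem), but as a proof of this theorem it would still need to be supplemented, for small $L$, by something like the paper's $\alpha$ --- at which point $\alpha$ alone already suffices.
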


\begin{remark}
  There is a minor delicate issue with the definition of recoverability of $L$.
  For a graph on some countable set of vertices, which may be enumerated by $\N$, we have the usual product $\sigma$-algebra generated by the presence of edges.
  The claim is that there exists a measurable function $f$ from the set of graphs on vertex set $\N$ to $\R_+$ such that, for any $L \in [2,\infty]$, any $p\in(0,1)$, any dense countable $S \subset \bbS_L$ and any enumeration of $S$, we almost surely have $f(G_{L,S,p}) = L$.
  Moreover, $f$ is invariant to relabeling the vertices of the graph.
  Crucially, this invariance is complete and not just probabilistic.
  That is, $f(G)$ is invariant to any permutation of the vertex labels of $G$, even if the permutation is allowed to depend on which edges are present in $G$.
  To clarify the strength of this property, consider \iid\ sequences $(X_i)$ of Bernoulli random variables.
  The expectation $p$ can be recovered almost surely from the law of large numbers as $\lim \frac1n\sum_{i\le n} X_i$.
  However, this function is not invariant to arbitrary permutations of the sequence if the permutation is allowed to depend on the sequence.
  The reason our function is strongly invariant to relabeling is that, for any $\ell$, the set of graphs with $f(G)\le \ell$ is described as those graphs satisfying a certain second-order proposition, which does not involve the labels.
\end{remark}

Our second and main result is concerned with self-distinguishability, showing that $\bbS_L$ has the Rado property if and only if $L$ is rational.
For rational $L$, we say that a set $S \subset \bbS_L$ is \textbf{integer-distance-free} if no two points in $S$ are at a distance which is a multiple of $\frac 1m$, where $L=\frac \ell m$ is irreducible.
This terminology may seem mysterious, so we remark that a set $S$ is integer-distance-free if and only if, starting at any point of $S$ and moving an integer distance along the cycle, possibly winding multiple times around the cycle, one can never terminate at another point of $S$.

\begin{thm}\label{thm:main}
  Let $L \ge 2$, let $S,S'$ be dense countable subsets of $\bbS_L$ and let $p,p' \in (0,1)$.
  Let $G=G_{L,S,p}$ and $G'=G_{L,S',p'}$ be independent. Then, almost surely,
  \begin{enumerate}
  \item $G \not\cong G'$ if $L \notin \Q$.
  \item $G \cong G'$ if $L \in \Q$ and $S$ and $S'$ are integer-distance-free.
  \end{enumerate}
\end{thm}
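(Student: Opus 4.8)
The plan is to extract from the isomorphism type of $G$ a hierarchy of geometric data, and then treat the two cases by opposite strategies: a back-and-forth construction when $L$ is rational, and a rigidity-plus-independence argument when $L$ is irrational. The first step, common to both, is to record which geometric information survives passage to the isomorphism type. Since $S$ is dense and $p\in(0,1)$, the graph distance between vertices $x,y$ is almost surely $\lceil d(x,y)\rceil$ whenever $d(x,y)\notin\Z$: one routes through intermediate vertices in steps just under $1$, and along infinitely many disjoint such routes at least one survives edge-deletion. Thus the graph recovers the coarse distance $\lceil d\rceil$ for all pairs, and in particular the diameter $\lceil L/2\rceil$. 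From coarse distances and a graph-theoretic betweenness relation I would recover the cyclic order of $S$ up to the dihedral symmetry, and hence a graph-theoretic winding number $\wind$ for cycles. The decisive dichotomy enters here: the integer orbit $\{x+n \bmod L : n\in\Z\}$ of a vertex is the set of coarse-distance transition loci emanating from $x$, and this orbit is \emph{dense} when $L\notin\Q$ but consists of only $\ell$ equally spaced points (a coset of $\tfrac1m\Z/L\Z$) when $L=\tfrac\ell m$. So for irrational $L$ the recoverable data pins down exact positions, while for rational $L$ it sees each vertex only through its cyclic order and its coset modulo $\tfrac1m$.

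For the Rado direction (part 2, $L\in\Q$) I would run a back-and-forth argument. Define the \emph{type} of a finite tuple to be its cyclic order together with all pairwise coarse distances and coset-classes modulo $\tfrac1m$; by the above this is an isomorphism invariant. The integer-distance-free hypothesis guarantees that distinct vertices occupy distinct cosets and that no exact integer-distance coincidences occur, so these types are precisely the combinatorial data carried by finite configurations. The key lemma is an extension/homogeneity statement: any realized type can, almost surely, be extended by one further vertex realizing any prescribed consistent type. Here rational periodicity is decisive—there are only $\ell$ position-classes, arranged with an exact $\tfrac1m$-rotational symmetry, so the global circular constraints on where a new vertex may sit reduce to finitely many cases, each met by invoking the \gec\ property to produce a vertex in the required small arc with the required adjacencies. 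Running back-and-forth between $G$ and $G'$ then yields a graph isomorphism, proving $G\cong G'$.

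For the strongly non-Rado direction (part 1, $L\notin\Q$) I would argue by rigidity and independence. Using density of the integer orbit I would upgrade recovery of $\lceil d\rceil$ to recovery of the exact metric: exact reference points arise as limits of coarse-distance transition loci, and density then propagates exact distances to all pairs. Consequently any isomorphism $\varphi\colon G\to G'$ must be an isometry of $(S,d)$ onto $(S',d')$, hence the restriction of a rotation or reflection of $\bbS_L$ carrying $S$ onto $S'$. A counting observation bounds the candidates: if $\Theta$ is the group of rotation angles preserving $S$, then $\Theta\cdot s\subseteq S$ is a single free orbit, so $|\Theta|\le|S|=\aleph_0$, and likewise the set of isometries $S\to S'$ is countable. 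Finally, conditioning on $S,S'$ and fixing one such candidate $\psi$: since the edges of $G$ and $G'$ are independent and each present with probability in $(0,1)$, the event that $\psi$ carries the edge set of $G$ exactly onto that of $G'$ forces infinitely many independent coin matches and has probability $0$. A countable union over candidates is still null, so almost surely $G\not\cong G'$.

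The main obstacle I anticipate is the pair of structural lemmas at the two extremes. On the rational side it is the extension lemma: one must verify that \gec\ suffices to realize a prescribed coarse-distance-and-coset type for a new vertex simultaneously against all previously placed vertices, checking that global circularity never over-constrains the admissible arc, which is exactly where the $\tfrac1m$-periodicity must be exploited. On the irrational side it is the exact-metric recovery: making precise, and purely graph-theoretic, the claim that the dense integer orbit reconstructs distances exactly rather than merely their ceilings is the delicate heart of the argument and the one place where $L\notin\Q$ is genuinely used.
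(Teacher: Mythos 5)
Your overall architecture matches the paper's remarkably closely: the rational case is proved there by exactly the back-and-forth you describe (preserving the coset class modulo $\tfrac1m$ and the relative order of the remainders --- the paper's ``extended step-isometry'' --- with \gec\ supplying the extension step), and the irrational case by upgrading graph data to exact distances, concluding that any isomorphism is an isometry of $\bbS_L$, and killing the countably many candidate isometries one at a time by independence of the edge sets. The dichotomy you isolate --- the integer orbit $x+\Z \bmod L$ is dense when $L\notin\Q$ and is a coset of $\tfrac1m\Z/L\Z$ when $L=\tfrac{\ell}{m}$ --- is precisely the paper's mechanism (there it appears as injectivity, for irrational $L$, of $x\mapsto(\lfloor x+kL\rfloor)_{k\ge1}$ via density of the fractional parts of $kL$).

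There is, however, a genuine gap at the step you treat as nearly automatic: the graph-theoretic recovery of betweenness/cyclic order. This does not follow from coarse distances. For three vertices pairwise within distance $1$, each pairwise graph distance is $1$ or $2$ according to the presence of three independent random edges, so coarse distances carry no information whatsoever about which of the three lies between the other two; one must use the full adjacency structure to auxiliary vertices. This is exactly what the paper's hardest lemma (\cref{lem:recover-arc}) does: for adjacent $u,v$ it recovers the set of vertices on the shorter arc between them, via the notion of \emph{small} versus \emph{large} finite sets (a finite set is small iff some vertex is adjacent to all of it, iff it lies in an open arc of length $2$), with separate and rather different arguments for $L\ge3$ and for $2<L<3$. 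Without this, your winding number is not defined graph-theoretically, and your key assertion --- that the integer orbit is the set of ``coarse-distance transition loci emanating from $x$'' --- fails as literally stated: plain graph distances exhibit transitions only at $x\pm k$ for integers $k\le L/2$, a finite (and non-dense) set. To see the whole orbit one needs shortest-path lengths at each fixed winding number $k\ge1$ (the paper's good paths, \cref{lem:good_paths_exist}: the shortest such path from $u$ to $v$ has length $\lfloor\|u-v\|+kL\rfloor+1$), and those are available only after cyclic order has been recovered. So the logical spine of your Part 1 rests on an unconstructed relation; the remainder of your plan is sound and coincides with the paper's proof.
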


The theorem implies that, for rational $L$, a generic dense countable set is a Rado set, whereas, for irrational $L$, every such set is strongly non-Rado.
In the rational case, there exist dense countable sets which are non-Rado (see Remark~\ref{rem:non-rado}).
In the irrational case, we can show more -- namely, that up to isometries of $\bbS_L$, one may recover $S$ from (the isomorphism type of) the random graph.

\begin{thm}\label{thm:recover_S}
Let $L>2$ be irrational, let $S,S'$ be dense countable subsets of $\bbS_L$ and let $p,p' \in (0,1)$. Suppose that $G=G_{L,S,p}$ and $G'=G_{L,S',p'}$ can be coupled so that $G \cong G'$ with positive probability. Then $S$ and $S'$ differ by an isometry of $\bbS_L$.
\end{thm}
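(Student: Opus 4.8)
The plan is to reduce the statement to a recovery result and then extract the metric. Concretely, I would construct a measurable, isomorphism-invariant map $\Phi$ on graphs such that, almost surely, $\Phi(G_{L,S,p})$ equals the isometry class of the metric space $(S,\dist|_S)$. Granting this, the theorem follows quickly: under any coupling with $\Pr(G\cong G')>0$, on that event $\Phi(G)=\Phi(G')$, so $(S,\dist)$ and $(S',\dist)$ are isometric as abstract metric spaces; since $S$ and $S'$ are dense in $\bbS_L$ and carry the intrinsic circle metric, any such isometry extends uniquely to the metric completions, i.e.\ to an isometry of $\bbS_L$ (a rotation or a reflection), and hence $S$ and $S'$ differ by an isometry of $\bbS_L$. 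Because $L$ is itself recoverable by \cref{thm:main0}, the task reduces to reconstructing every pairwise distance $\dist(u,v)$ as an isomorphism invariant of $G_{L,S,p}$.

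To reconstruct distances I would use a displacement--winding calculus. Each edge of $G_{L,S}$ carries a signed displacement in $(-1,1)$, and a walk from $u$ to $v$ accumulates a total displacement $c+wL$, where $c\in(-L/2,L/2]$ is the signed gap between $u$ and $v$ and $w\in\Z$ is the \emph{winding} of the walk; since each step has length less than $1$, a walk of winding $w$ uses at least $\lfloor|c+wL|\rfloor+1$ edges. Using that $S$ is dense and $p\in(0,1)$, I would first show that minimal walk lengths are unaffected by the passage from $G_{L,S}$ to the random subgraph $G_{L,S,p}$---density provides enough candidate intermediate vertices to route around absent edges---so that for each $w$ the minimal number of edges in a $u$--$v$ walk of winding $w$ equals $k_w(u,v):=\lfloor|c+wL|\rfloor+1$ almost surely. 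The point of irrationality enters last: when $L\notin\Q$ the fractional parts $\{c+wL\}$ are equidistributed, so $k_w(u,v)-1-wL=c-\{c+wL\}\le c$ with the bound attained in the limit, giving $c=\sup_{w>0}\big(k_w(u,v)-1-wL\big)$ and hence $\dist(u,v)=|c|$. In the rational case this supremum only pins down $c$ modulo $\tfrac1m$, which is exactly why generic sets are Rado there and no such recovery is possible.

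The main obstacle is that ``winding $w$'' is a topological datum with no intrinsic combinatorial meaning, so that the quantities $k_w(u,v)$ are not visibly isomorphism invariants. I would resolve this by first recovering the cyclic order of $S$, up to reflection, from the graph: writing $A_x=(x-1,x+1)$ for the unit neighborhood of $x$, betweenness of points in a common short arc is detected by neighborhood containment, since for $u<v<w$ one has the arc identity $A_u\cap A_w\subseteq A_v$, a relation expressible through common neighbors, and the resulting local orders can be glued consistently around the circle. Fixing a reference gap in this cyclic order (placed away from $u$ and $v$), the winding of a walk becomes the signed number of its edges that cross the gap, which turns each $k_w$ into a genuine graph invariant; the identification of essential (winding $\pm1$) cycles together with the value of $L$ furnished by \cref{thm:main0} calibrate the integer $w$ and the term $wL$. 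Carrying out this order recovery \emph{robustly in the presence of edge randomness}---so that the containment relations are read off not from the full arcs but from their $p$-random, hence only statistically faithful, traces---is where I expect the bulk of the technical work to lie. Once the cyclic order and winding are in place, the calculus of the second paragraph returns every $\dist(u,v)$, and the reduction of the first paragraph completes the proof.
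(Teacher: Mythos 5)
Your outer structure is the same as the paper's: reduce the theorem to showing that pairwise distances in $\bbS_L$ are isomorphism invariants of the graph (so that any isomorphism is an isometry of $S$ onto $S'$, which extends to the completion $\bbS_L$), and recover distances by a winding calculus. Your quantities $k_w(u,v)=\lfloor|c+wL|\rfloor+1$ are exactly the minimal lengths of the paper's good paths (\cref{lem:good_paths_exist}), your claim that density lets these minima be achieved in the random subgraph is the content of \cref{prop:recover-distances}, and your formula $c=\sup_{w>0}\bigl(k_w(u,v)-1-wL\bigr)$, using equidistribution of the fractional parts of $wL$, is a correct (indeed more explicit) version of the injectivity argument in \cref{cor:recover-distances}. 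You also correctly identify the crux: winding is not visibly a graph invariant and must be made one by recovering the cyclic order.

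That crux, however, is precisely where your proposal has a genuine gap, and your sketch for it would fail. It is the content of the paper's \cref{lem:recover-arc}, and three things go wrong in your plan. First, the unit balls $A_x=(x-1,x+1)\cap S$ are not recoverable from the graph, so ``betweenness via $A_u\cap A_w\subseteq A_v$, expressed through common neighbors'' has no graph-theoretic meaning: adjacency is not equivalent to cycle distance less than $1$ (a pair at distance less than $1$ may or may not be an edge), and by \eqref{eq:floor-dist} graph distance only detects the thresholds $\|u-v\|<k$ for $k\ge 2$. This is exactly the obstruction the paper points out before its proof (``\eqref{eq:floor-dist} does not allow to recover intervals of the form $(v-1,v+1)\cap S$'') and circumvents with the small/large-set calculus of \cref{lem:union-of-intervals}, which detects containment in arcs of length $2$, not $1$. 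Second, even as a statement about ideal arcs your identity is false when $2<L<4$: if $\|u-w\|>L-2$, the balls $A_u$ and $A_w$ also overlap on the far side of the circle, so $A_u\cap A_w$ has a second component that is not contained in $A_v$; this wrap-around is why the paper needs a separate and substantially more delicate argument for $2<L<3$ (its Steps 1--3 with $\delta=L-2$). Third, your framing of the remaining work as reading containments ``robustly'' from the $p$-random, ``statistically faithful'' traces of arcs is the wrong frame: after \cref{lem:gec}, the graph is almost surely \gec\ with unit threshold, and the entire reconstruction is then deterministic--a correct proof must (and in the paper does) work for \emph{every} graph in $\cG_L$, with the \gec\ property, not statistics, supplying the needed vertices. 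So what you have is the correct shell of the paper's argument with its central lemma left open, and the route you propose for closing it runs into known, concrete obstacles.
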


\section{Definitions and notation}

We view elements of $\bbS_L$ as real numbers modulo $L$, and we sometimes identify $\bbS_L$ with the interval $[0,L)$.
An \textbf{open arc} is the image of an open interval in $\R$ modulo $L$.
For $a,b \in \bbS_L$, we write $(a,b)_{\bbS_L}$ for the positive/anti-clockwise open arc from $a$ to $b$, i.e., for $(a,b)$ when $0 \le a \le b < L$ and for $(a,b+L)$ modulo $L$ when $0 \le b < a < L$. Thus, $(a,b)_{\bbS_L}$ and $(b,a)_{\bbS_L}$ partition $\bbS_L \setminus \{a,b\}$.
The length of an open arc $(a,b)_{\bbS_L}$ is $b-a$ if $b \ge a$ and is $b-a+L$ if $a>b$. When $a$ and $b$ are real numbers (not necessarily in $\bbS_L$) with $\|a-b\|<L$, we may unambiguously define $(a,b)_{\bbS_L}$ by interpreting $a$ and $b$ modulo $L$. With a slight abuse of notation, we simply write $(a,b)$ for $(a,b)_{\bbS_L}$.
Closed arcs and half-open-half-closed arcs are similarly defined.
The distance between two points $u,v \in \bbS_L$, denoted $\|u-v\|$, is the length of the shorter of the two arcs between $u$ to $v$ and can be written as
\[ \|u-v\| = \min \{ |u-v|, L - |u-v| \} .\]

Given a graph $G$, we write $N(v)$ for the neighbourhood of a vertex $v$ in $G$, and we write $\dist_G(u,v)$ for the graph-distance between vertices $u$ and $v$ in $G$. The length of a path in $G$ is the number of edges in the path, so that $\dist_G(u,v)$ is the length of a shortest path between $u$ and $v$.

A graph $G$ whose vertex set is a subset $S$ of $\bbS_L$ is called \textbf{\gec} (geometrically existentially closed) if, for any vertex $s \in S$, any disjoint finite sets $A,B \subset S$ which are contained in $(s-1,s+1)$, and any $\epsilon>0$, there exists a vertex $v \in S \setminus (A \cup B)$ which is adjacent to every vertex in $A$, is not adjacent to any vertex in $B$, and satisfies that $\|v-s\| < \epsilon$.
The graph $G$ is said to have \textbf{unit threshold} if any two adjacent vertices $u,v \in S$ satisfy that $\|u-v\|<1$.
The notions of \gec\ and unit threshold exist in the general context of a graph whose vertex set is a metric space.

For a dense countable subset $S$ of $\bbS_L$, let $\cG_{L,S}$ denote the collection of all \gec\ graphs on $S$ having unit threshold. Let $\cG_L$ denote the union of $\cG_{L,S}$ over all such $S$. As it turns out, once we establish the following simple property that $G_{L,S,p}$ belongs to $\cG_{L,S}$, almost all our arguments become completely deterministic.

\begin{lemma}\label{lem:gec}
Let $L \in [2,\infty]$, let $S$ be a dense countable subset of $\bbS_L$ and let $p \in (0,1)$. Then, almost surely, $G_{L,S,p} \in \cG_{L,S}$.
\end{lemma}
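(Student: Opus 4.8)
The plan is to note that unit threshold is automatic and then to verify the \gec\ property by a direct second-moment-free argument (essentially Borel--Cantelli) over a cloud of candidate vertices accumulating at $s$. Unit threshold in fact holds surely, not merely almost surely: every edge of $G_{L,S,p}$ is an edge of $G_{L,S}$, and $G_{L,S}$ by definition joins only points at distance less than $1$, so there is nothing random to check. For \gec, the first step is to replace the uncountable family of conditions by a countable one. Since $S$ is countable there are only countably many triples $(s,A,B)$ with $s \in S$ and $A,B$ disjoint finite subsets of $S \cap (s-1,s+1)$, and it suffices to verify the conclusion for $\epsilon = 1/n$ with $n \in \N$, because a vertex witnessing the property for some $\epsilon' \le \epsilon$ also witnesses it for $\epsilon$. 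Hence the event ``$G_{L,S,p}$ fails \gec'' is a countable union of events indexed by $(s,A,B,n)$, and it is enough to show each of these has probability zero.

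So I would fix $s$, disjoint finite $A,B \subset S \cap (s-1,s+1)$, and $\epsilon>0$. The one geometric input is that, as $A$ is finite and contained in the open unit ball around $s$, the quantity $\delta := 1 - \max_{a \in A}\|a-s\|$ is strictly positive. By the triangle inequality, any $v$ with $\|v-s\| < \min(\epsilon,\delta)$ then satisfies $\|v-a\| < 1$ for every $a \in A$, so each edge $\{v,a\}$ is present in the underlying graph $G_{L,S}$ and is therefore retained independently with probability $p$. Let $W$ be the set of all $v \in S \setminus (A\cup B)$ with $\|v - s\| < \min(\epsilon,\delta)$. Since $S$ is dense in $\bbS_L$, which has no isolated points, the open arc $\{v : \|v-s\|<\min(\epsilon,\delta)\}$ meets $S$ in infinitely many points, and removing the finite set $A \cup B$ leaves $W$ infinite.

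For each $v \in W$ let $E_v$ be the event that $v$ is adjacent to every vertex of $A$ and to no vertex of $B$. Each $A$-edge at $v$ is retained with probability $p$, while each $b \in B$ fails to be adjacent to $v$ with probability at least $1-p$ (either $\|v-b\|\ge 1$, so no edge exists, or the edge exists but is not retained); hence $\Pr(E_v) \ge p^{|A|}(1-p)^{|B|} =: c > 0$, a bound uniform over $v \in W$. The key point is that $E_v$ depends only on the edges $\{\{v,x\} : x \in A\cup B\}$, and for distinct $v,v' \in W$ these edge sets are disjoint, so the events $\{E_v\}_{v \in W}$ are independent. As $W$ is infinite and each $E_v$ has probability at least $c$, almost surely at least one $E_v$ occurs, and any such $v$ is exactly a \gec\ witness for $(s,A,B,\epsilon)$. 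Intersecting over the countably many $(s,A,B,n)$ then gives the claim.

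The main thing to get right is this independence step together with the uniform lower bound $c$: together they turn ``\gec\ can fail at $(s,A,B,\epsilon)$'' into a null event, and everything else is bookkeeping. The only genuinely geometric ingredient is the reduction via $\delta$, which guarantees that vertices sufficiently close to $s$ automatically lie within unit distance of all of $A$, so that the sole obstruction to the required adjacencies is the independent edge retention rather than the geometry of the circle.
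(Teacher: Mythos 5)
Your proof is correct and follows essentially the same route as the paper's: unit threshold is immediate, and \gec\ is verified by a countable reduction followed by the observation that the candidate witnesses near $s$ give infinitely many independent events, each with probability bounded below by $p^{|A|}(1-p)^{|B|}$. The only (harmless) cosmetic difference is that the paper shrinks $\epsilon$ so that all of $A\cup B$ lies within unit distance of the candidates, making the probability exactly $p^{|A|}(1-p)^{|B|}$, whereas you shrink only relative to $A$ and bound the $B$-side contribution below by $1-p$ per vertex.
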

\begin{proof}
It is trivial from the definition that $G_{L,S,p}$ almost surely has unit threshold.
Fix a vertex $s \in S$, disjoint finite sets $A,B \subset S$ which are contained in the open unit-ball around $s$, and a rational $0<\epsilon< 1-\max_{u \in A \cup B} \|u-s\|$. Since $S$ is countable, it suffices to show that, almost surely, there exists a vertex $v \notin A \cup B$ which is adjacent to every vertex in $A$, is not adjacent to any vertex in $B$, and satisfies that $\|v-s\| < \epsilon$. Since the open ball of radius $\epsilon$ around $s$ contains infinitely many points $v$ which are not in $A \cup B$, and since the sets of edges incident to these $v$ are independent, it suffices to show that each such $v$ has a positive (fixed) probability to be adjacent to all of $A$ and to none of $B$. Indeed, since any such $v$ has $\|u-v\|<1$ for all $u \in A \cup B$, this event has probability $p^{|A|}(1-p)^{|B|}$.
\end{proof}

\section{Distinguishing graphs arising from different $L$}

In this section, we prove \cref{thm:main0}.
Our strategy is to introduce a graph-theoretic quantity which allows to differentiate between graphs arising from different $L$.

For a graph $G$, define $\lambda(G)$ to be the supremum over $\lambda \ge 0$ such that for every finite set of vertices $U \subset G$, there exists a vertex in $G$ having at least $\lambda |U|$ neighbours in $U$.
Thus,
\[ \lambda(G) := \inf_{\substack{U \subset G\\0<|U|<\infty}} \sup_{v \in G}\, \frac{|N(v) \cap U|}{|U|} .\]

Consider a graph $G \in \cG_{L,S}$.
It is easy to check that $\lambda(G)=1$ if $L \le 2$, since $G$ is just the Rado graph.
If $L=\infty$, then $\lambda(G)=0$, since $S$ contains arbitrarily large finite sets $U$ such that all vertices of $U$ are at distance more than $2$ from each other, so that $|N(v)\cap U| \leq 1$ for all $v$.
In fact, as we now show, $\lambda(G)$ depends on $G$ only through $L$, and moreover, is equal to $2/L$.
Theorem~\ref{thm:main0} is an immediate consequence of Lemma~\ref{lem:gec} and the following.

\begin{prop}
  Let $L \in [2,\infty]$ and $G \in \cG_L$. Then $\lambda(G) = 2/L$.
\end{prop}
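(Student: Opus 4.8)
The plan is to prove the two inequalities $\alpha(G)\le 2/L$ and $\alpha(G)\ge 2/L$ separately. Both rest on a single geometric observation: since $G$ has unit threshold, the neighbours of any vertex $v$ lie in the open arc $(v-1,v+1)$, which (as $L\ge 2$) has length exactly $2$, so $|N(v)\cap U|\le |U\cap(v-1,v+1)|$ for every finite $U$ and every $v$. The number $2/L$ is precisely the average density of $U$ in such a length-$2$ arc when $U$ is spread over $\bbS_L$, and the two bounds say that this average is both essentially optimal (via a nearly equally-spaced construction) and unavoidable (via \gec). I treat $L=\infty$, where the statement reads $\alpha(G)=0$, at the end.

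For the upper bound I would exhibit, for each large $n$, a set $U=\{u_0,\dots,u_{n-1}\}\subset S$ with $u_i$ within $\delta_n$ of the equally-spaced point $iL/n$, choosing $\delta_n\to 0$ fast enough (say $n\delta_n\to 0$); such points exist because $S$ is dense. Consecutive points of $U$ are then at distance close to $L/n$, so any arc of length $2$ — in particular $(v-1,v+1)$ for any vertex $v$ — meets $U$ in at most $\tfrac{2n}{L}(1+o(1))+1$ points. By the geometric observation, $\sup_v |N(v)\cap U|/|U|\le \tfrac{2}{L}+o(1)$, and letting $n\to\infty$ yields $\alpha(G)\le 2/L$.

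For the lower bound, fix any finite $U$ with $|U|=n$ and consider the integer-valued step function $f(s):=|U\cap(s-1,s+1)|$ on $\bbS_L$. Integrating over $s\in\bbS_L$ and using that each $u\in U$ lies in $(s-1,s+1)$ exactly for $s$ in an arc of length $2$, one gets $\int_{\bbS_L}f=2n$; since $f$ is a step function it attains a value $f(s^\ast)\ge \frac1L\int_{\bbS_L}f=2n/L$. Setting $A:=U\cap(s^\ast-1,s^\ast+1)$, which is contained in the open unit ball around $s^\ast$ with $|A|=f(s^\ast)$, the \gec\ property (applied with $B=\emptyset$ and $\epsilon$ small enough that every point of the $\epsilon$-ball around $s^\ast$ is within distance $1$ of all of $A$, possible since $A$ is finite and lies in the open arc) produces a vertex $v$ adjacent to all of $A$. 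Hence $|N(v)\cap U|\ge |A|\ge 2n/L$, so $\sup_v |N(v)\cap U|/|U|\ge 2/L$; as $U$ was arbitrary, $\alpha(G)\ge 2/L$.

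Combining the two bounds gives $\alpha(G)=2/L$ for $2\le L<\infty$. For $L=\infty$ the lower bound is vacuous and the averaging argument no longer applies (the circle has infinite measure), but the upper bound survives in simpler form: taking $U$ to be $n$ points of $S$ that are pairwise more than $2$ apart forces $|N(v)\cap U|\le 1$ for every $v$, so $\sup_v|N(v)\cap U|/|U|\le 1/n\to 0$ and $\alpha(G)=0=2/\infty$. The step I would be most careful about is the passage from \emph{a center $s^\ast$ with many nearby points} to \emph{a genuine vertex adjacent to all of them}: this is exactly where \gec\ is essential, and one must verify that $\epsilon$ can be taken small enough to respect the unit threshold while still invoking \gec\ verbatim as stated.
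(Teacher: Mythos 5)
Your proof is correct and takes essentially the same approach as the paper: an almost equally spaced set yields the upper bound, and averaging the count $|U\cap(s-1,s+1)|$ over $s\in\bbS_L$ together with \gec\ yields the lower bound (the paper relegates the $L=\infty$ case to a remark preceding the proposition, using the same well-separated set you use). The one point needing repair is that the \gec\ property requires its center to be a \emph{vertex} of $S$, while your $s^\ast$ is merely a point of $\bbS_L$; since $A$ is finite and lies in the open arc $(s^\ast-1,s^\ast+1)$, density of $S$ provides a vertex $s\in S$ so close to $s^\ast$ that $A\subset(s-1,s+1)$, and \gec\ applied at $s$ finishes the argument---this is precisely the role of the phrase ``since $S$ is dense'' in the paper's proof.
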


\begin{proof}
Let $L \in [2,\infty]$ and $G \in \cG_L$.
By definition of $\cG_L$, the vertex set of $G$ is a dense countable subset $S$ of $\bbS_L$.
For a finite $U \subset S$ and an arc $A \subset \bbS_L$, we call $|U \cap A|/|U|$ the \emph{density of $U$ in $A$}.

We begin by proving the upper bound $\lambda(G) \le 2/L$. Since $G$ has unit threshold, for any $v$, $N(v)$ is contained in an arc of length $2$.
Thus it suffices to exhibit, for any $\epsilon>0$, a finite set $U \subset S$ whose density is no more than $2/L + \epsilon$ in any arc of length 2.
Any set that is close to evenly distributed on the cycle will do.
For completeness, here is one construction:
Let $n$ be a large integer and consider the set $V$ consisting of points $v_0,\dots,v_{n-1} \in \bbS_L$, where $v_i := iL/n$.
Since $S$ is dense, there exist a finite set $U$ consisting of points $u_0,\dots,u_{n-1} \in S$ such that $\|u_i-v_i\|<1/n$. It is straightforward to verify that, for any arc $A$ of length $r$, we have that $|U \cap A| \le |V \cap A| + 2 \le \lfloor rn/L \rfloor + 3$. Thus, $U$ has density at most $2/L + 3/n$ in any arc of length 2.

We now turn to the lower bound $\lambda(G) \ge 2/L$.
To show this, we show that the situation described above is essentially the worst case.
Precisely, given a finite $U \subset S$, we claim that there exists an open arc $A$ of length $2$ in which $U$ has density at least $2/L$.
This is easy to verify, since if $x$ is uniform in $\bbS_L$, then the expected number of points of $U$ in the arc $(x-1,x+1)$ is $(2/L)|U|$, so for some $x$ it is at least that large.
Since $S$ is dense, and $G$ is \gec, the arc $A$ contains a vertex $v\in S$ which is adjacent to all vertices of $U$ in $A$.
This proves the lower bound $\lambda(G) \ge 2/L$.
\end{proof}

\section{Recovering distances and non-isomorphism for irrational $L$}

In this section, we prove part (1) of \cref{thm:main}, namely that for irrational $L>2$, the independent graphs $G_{L,S,p}$ and $G_{L,S',p'}$ are almost surely non-isomorphic.
The key step is to show that by looking at the graph-structure of $G$ alone, it is possible to determine the distance in $\bbS_L$ between any two vertices.

Throughout this section, we fix $L \in [2,\infty]$ and assume $G \in \cG_{L,S}$ for some dense countable $S \subset \bbS_L$.
It is easy to check that, for any two vertices $u,v \in S$ and any integer $k \ge 2$, we have
\[ \|u-v\| < k \qquad \iff \qquad \dist_G(u,v) \le k .\]
Indeed, if $\|u-v\|<k$, then for some $\eps>0$, there is a path $u=x_0,x_1,\dots,x_k=v$ with $\|x_i-x_{i-1}\| < 1-\eps$.
By \gec\ there is a perturbation $x'_i$ of $x_i$ for $i=1,\dots,k-1$ so that $(x'_i,x'_{i-1})$ are edges of $G$ for all $i=1,\dots,k$.
Conversely, the unit-threshold property shows that a path of length at most $k$ in $G$ from $u$ to $v$ implies that the cycle distance is less than $k$.
Note that for $k=1$ this equivalence fails, since $\{u,v\}$ may or may not be an edge of $G$. See \cite[Theorem~2.4]{bonato2011infinite} for a similar statement.

We may rewrite the above as
\begin{equation}\label{eq:floor-dist}
  \dist_G(u,v) =
  \begin{cases}
    \lfloor \|u-v\| \rfloor +1 &\text{if } \|u-v\|\ge 1 \\
    \text{1 or 2} &\text{if } \|u-v\| < 1
  \end{cases}.
\end{equation}
Thus, distances in $G$ are predetermined for points of $S$ which are at distance at least 1 in $\bbS_L$.
However, we are more interested in the other direction of implication: forgetting that the vertices of $G$ are labelled by elements of $\bbS_L$ and looking only at the graph structure of $G$ in relation to $(u,v)$, one may recover $\lfloor \|u-v\| \rfloor$, unless it is 0 or 1.

To formalize these types of ideas, we require some definitions.
A graph with $k$ distinguished vertices is a graph $G$, together with an ordered $k$-tuple $(v_1,\dots,v_k)$ of distinct vertices of $G$.
Let $\cG_{L,\bullet,\bullet}$ denote the collection of all graphs in $\cG_L$ with two distinguished vertices.
Let $\pi$ denote the projection from $\cG_{L,\bullet,\bullet}$ to the class $\cG_{\bullet,\bullet}$ of  isomorphism classes of graphs with two distinguished vertices.
The above may be restated as saying that the function $(G,u,v) \mapsto \lfloor \|u-v\| \rfloor \1_{\{\|u-v\|\ge2\}}$ from $\cG_{L,\bullet,\bullet}$ to $\Z$ can be written as a composition of a function from $\cG_{\bullet,\bullet}$ to $\Z$ with $\pi$.
Indeed, \eqref{eq:floor-dist} gives a concrete such description, since the graph-distance between the distinguished vertices is invariant to isomorphisms of the graph.
In this case, we say that $\lfloor \|u-v\| \rfloor \1_{\{\|u-v\|\ge2\}}$ can be recovered from the graph structure of $(G,u,v)$.

More generally, we say that a function $f \colon \cG_{L,\bullet,\bullet} \to \Omega$ \textbf{can be recovered} from the graph structure of $(G,u,v)$ if $f=F \circ \pi$ for some $F \colon \cG_{\bullet,\bullet} \to \Omega$.
For brevity, we say that $f(G,u,v)$ can be recovered from $\pi(G,u,v)$.
We extend these definitions to graphs with $k$ distinguished points, writing $\pi$ also for the projection from $\cG_{L,\bullet,\dots,\bullet}$ to the corresponding set of isomorphism classes of graphs with $k$ distinguished vertices.
We shall also talk about sets of vertices being recoverable from the graph.
For example, \cref{lem:recover-arc} below says that the set of vertices in the shorter arc between $u$ and $v$ along the cycle is recoverable.
Formally, a set $A=A(G,u,v)$ of vertices of $G$ can be recovered from $\pi(G,u,v)$, if the function $\1_{\{x\in A\}}$ can be recovered from $\pi(G,u,v,x)$ for any $x \in G$.

The main ingredient in this section is the following proposition, which shows that we can recover plenty of information on the distances in $\bbS_L$ from the graph structure (for both rational and irrational $L$).
Given this, part (1) of \cref{thm:main} is easily deduced.

\begin{prop}\label{prop:recover-distances}
  Let $L>2$, let $G \in \cG_L$ and let $u,v \in G$ be adjacent.
  Then the sequence
  \[ (\lfloor \|u-v\| + kL \rfloor)_{k \ge 1} \]
  can be recovered from $\pi(G,u,v)$.
\end{prop}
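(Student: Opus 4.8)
The plan is to reinterpret the target quantity as a minimal path length in $G$ subject to a winding constraint, and then to argue that this winding-constrained minimal length is a graph isomorphism invariant, hence a function of $\pi(G,u,v)$.

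First I would establish the geometric identity that, for adjacent $u,v$ and each $k\ge 1$, the number $\lfloor \|u-v\|+kL\rfloor+1$ equals the minimal number of edges in a walk from $u$ to $v$ that winds $k$ times around $\bbS_L$ in the direction of the short arc from $u$ to $v$. To make sense of winding, lift a walk $u=x_0,x_1,\dots,x_n=v$ to $\R$: since $G$ has unit threshold and $L\ge2$, every edge has length $<1<L/2$ and so lifts unambiguously to a step in $(-1,1)$, whence the net displacement equals $\|u-v\|+kL$ for a well-defined integer $k$, the winding number. The lower bound is immediate, since $n$ steps of absolute value $<1$ cannot achieve displacement $\|u-v\|+kL$ unless $n>\|u-v\|+kL$, i.e.\ $n\ge \lfloor\|u-v\|+kL\rfloor+1$; this uses only the net displacement, so it applies to every winding-$k$ walk. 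For the matching upper bound I would use \gec: placing $n=\lfloor\|u-v\|+kL\rfloor+1$ equally spaced targets along the lift from $u$ to $u+\|u-v\|+kL\equiv v$, consecutive targets lie at distance $(\|u-v\|+kL)/n<1$, and \gec\ together with density lets one select vertices near the targets, with endpoints $u$ and $v$ fixed, so that consecutive vertices are adjacent and the resulting walk has the prescribed winding. This reduces the proposition to recovering, from $\pi(G,u,v)$, the minimal length of a walk from $u$ to $v$ of winding $k$.

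The heart of the matter, and the step I expect to be the main obstacle, is that winding is a priori a feature of the embedding $S\subset\bbS_L$, not of the abstract graph, so it must be expressed as a function of the isomorphism type. The route I would take is to recover the cyclic order of $S$ from the graph, concretely by recovering the ternary betweenness relation ``$z$ lies on the shorter arc between $x$ and $y$'' as a function of $\pi(G,x,y,z)$. Granting this, an orientation of $\bbS_L$ is recoverable up to reflection, every edge acquires a recoverable direction, and the winding number of any walk can be read off as the net signed number of its edges whose short arc straddles a fixed reference longitude (that of $u$, say), all of which is invariant under isomorphism. Consequently the minimal winding-$k$ walk length is a function of $\pi(G,u,v)$, and by the identity above it equals $\lfloor\|u-v\|+kL\rfloor+1$. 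The reflection ambiguity in the orientation is harmless, since $\|u-v\|$ is symmetric and the two orientations yield the same family of values for $k\ge1$.

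The crux is therefore the recoverability of betweenness, which cannot be extracted from graph distances alone: when $z$ lies inside the short arc of an edge $\{x,y\}$, all three pairwise distances fall below $1$, precisely the regime in which $\dist_G$ fails to determine the metric distance through \eqref{eq:floor-dist}. I would accordingly detect betweenness through a \gec-based certificate rather than through distances, characterizing the vertices of a short arc by the pattern of adjacencies they can be forced to share with suitably constructed witnesses, and this structural recovery is where the real work lies. I note finally that the circle length $L$ is already recoverable from the graph, being equal to $2/\alpha(G)$, so the remaining content of the proposition is exactly the extraction of the fractional information encoded in $\|u-v\|$, which is what the winding-constrained minimal walk lengths supply.
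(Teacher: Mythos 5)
Your reduction is sound and in fact mirrors the paper's own strategy: the identity ``minimal length of a winding-$k$ walk from $u$ to $v$ in the short-arc direction equals $\lfloor\|u-v\|+kL\rfloor+1$'' is precisely the paper's \cref{lem:good_paths_exist} (proved there the same way: a displacement lower bound from the unit threshold, and a \gec\ construction along equally spaced targets for the upper bound), and the final deduction of the proposition from it is identical. The difference is that the paper pins down the winding direction via ``good paths'' (the first step must overshoot $v$), while you propose a global orientation recovered up to reflection; both devices work, and both rest on the same prerequisite.

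That prerequisite is where your proposal has a genuine gap: the recoverability of the betweenness relation, i.e.\ of the short arc $A_{x,y}$ from $\pi(G,x,y)$, which you correctly identify as ``the heart of the matter'' and ``where the real work lies'' --- and then do not prove. You only gesture at ``a \gec-based certificate'' and ``patterns of adjacencies with suitably constructed witnesses,'' with no actual characterization. This is not a routine verification: it is the paper's \cref{lem:recover-arc}, whose proof occupies an entire subsection and is the technical core of the whole argument. It requires introducing the small/large dichotomy (a finite set is \emph{small} iff some vertex is adjacent to all of it, which by \gec\ and unit threshold holds iff the set fits in an open arc of length $2$), and then a case analysis: for $L\ge 3$ one recovers $A_{b,c}$ from a path $(a,b,c,d)$ forming a large set by comparing the operators $U\mapsto C(U)$; for $2<L<3$ (writing $L=2+\delta$) one first needs the finer step of recovering the relation $\1_{\{\|u-v\|<\delta\}}$ via small sets $U$ with $U\cup\{u\}$ and $U\cup\{v\}$ both large, and only then can one piece together $A_{a,b}$ from short sub-arcs. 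Your own observation that graph distances cannot do this job (since all relevant pairwise distances lie below $1$, where \eqref{eq:floor-dist} is uninformative) shows why some genuinely new certificate is needed; without constructing one, the winding number is not a function of the isomorphism class, and the proof does not go through.
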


The values $\|u-v\| + kL$ can be thought of as the distance from $u$ to $v$, moving $k$ additional times around the cycle instead of directly.
The assumption that $u,v$ are adjacent in $G$ can be removed from this proposition, but it simplifies the proof and does not significantly impact the application for the proof of \cref{thm:main}.
In the case of irrational $L$, this gives the following, stronger corollary, which immediately implies the more general result.

\begin{cor}\label{cor:recover-distances}
  Let $L>2$ be irrational, let $G \in \cG_L$ and let $u,v \in G$.
  Then $\|u-v\|$ can be recovered from $\pi(G,u,v)$.
\end{cor}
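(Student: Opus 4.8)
The plan is to feed the winding sequence produced by \cref{prop:recover-distances} into a number-theoretic extraction which, using the irrationality of $L$, reads off the exact value of $D := \|u-v\|$. Throughout, note that $L$ itself is already recoverable from the graph by \cref{thm:main0}, so the integer sequence $(\lfloor kL\rfloor)_{k\ge1}$ may be treated as known. For adjacent $u,v$ the sequence $a_k := \lfloor D + kL\rfloor$ is recoverable from $\pi(G,u,v)$ directly by \cref{prop:recover-distances}; for a general pair we invoke the same proposition with its adjacency hypothesis removed, as noted in the remark following it, so that $(a_k)_{k\ge1}$ is recoverable for every $u,v$. It therefore suffices to exhibit $D$ as an isomorphism-invariant function of the sequence $(a_k)_{k\ge1}$.

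For the extraction, denote the fractional part by $\{x\} := x - \lfloor x \rfloor$, and set $c_k := a_k - \lfloor kL\rfloor$. Splitting $D+kL$ into its integer and fractional parts and using that $\lfloor kL\rfloor$ is an integer, one checks the identity
\[ c_k = \lfloor D \rfloor + \1_{\{kL\} \ge 1 - \{D\}}, \]
so that each $c_k \in \{\lfloor D \rfloor, \lfloor D \rfloor + 1\}$. Since $L$ is irrational, $(\{kL\})_{k\ge1}$ is dense in $[0,1)$, so the value $\lfloor D \rfloor$ is attained and $\lfloor D \rfloor = \min_{k\ge1} c_k$. Moreover, by Weyl's equidistribution theorem $(\{kL\})_{k\ge1}$ is equidistributed in $[0,1)$, whence the asymptotic density of $\{k : \{kL\} \ge 1 - \{D\}\}$ equals the length $\{D\}$ of the interval $[1-\{D\},1)$; equivalently $\{D\} = \lim_{n\to\infty} \frac1n \sum_{k=1}^n (c_k - \min_{j} c_j)$. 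Combining, $D = \lfloor D \rfloor + \{D\}$ is obtained as a fixed deterministic function of $(a_k)_{k\ge1}$.

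Every operation above — subtracting the known integers $\lfloor kL\rfloor$, taking a minimum, and taking a Cesàro limit — is a deterministic function of the recoverable sequence $(a_k)$, which in turn depends on $(G,u,v)$ only through $\pi(G,u,v)$; hence $D = \|u-v\|$ is recovered from $\pi(G,u,v)$, as required. The crux of the argument, and the only place the irrationality of $L$ enters, is the passage from the $0/1$ pattern $(c_k - \lfloor D\rfloor)$ to the real number $\{D\}$ via equidistribution. I expect this to be the main point rather than the main obstacle: the genuine content is that irrationality makes the fractional part readable off a frequency, which is precisely where the rational and irrational regimes diverge, since for rational $L$ the values $\{kL\}$ visit only finitely many points and such a frequency cannot resolve $\{D\}$ — in line with the rational case being Rado. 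The one technical point needing care is the appeal to the adjacency-free form of \cref{prop:recover-distances}; alternatively one may keep the stated (adjacent) version and reduce a general pair to adjacent pairs by using \gec\ to realize a chain of vertices with adjacent consecutive members progressing along the shorter $u$–$v$ arc, then summing the recovered sub-lengths.
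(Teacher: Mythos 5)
Your treatment of the adjacent case is correct and is genuinely more explicit than the paper's. The paper only needs that the map $x \mapsto (\lfloor x+kL \rfloor)_{k\ge1}$ is injective on $[0,L)$, which follows from density of the fractional parts $\{kL\}$ alone; you instead invert the map explicitly, reading $\lfloor D\rfloor$ off as $\min_k c_k$ (density) and $\{D\}$ as a Ces\`aro frequency (Weyl equidistribution). Your identity $c_k = \lfloor D\rfloor + \1_{\{kL\}\ge 1-\{D\}}$ is right and both extraction steps are sound; what your version buys is a concrete formula for $D$, at the cost of invoking equidistribution where mere density suffices. One citation point: \cref{thm:main0} is a probabilistic statement about $G_{L,S,p}$, so to treat $(\lfloor kL\rfloor)_{k\ge1}$ as known for a deterministic $G\in\cG_L$ you should instead appeal to the proposition of the preceding section asserting $\alpha(G)=2/L$ (which is deterministic and isomorphism-invariant); alternatively, note that the recovering function $F$ is in any case permitted to depend on $L$, so this step is not actually needed.

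The non-adjacent case is where your proposal has a gap. Your primary route invokes an ``adjacency-free form'' of \cref{prop:recover-distances}, but no such form is proved anywhere: the remark following that proposition merely asserts the hypothesis can be removed, so leaning on it is circular from the standpoint of what has actually been established. Your fallback is the right idea --- and is what the paper does --- but as stated it is not isomorphism-invariant: you propose to ``realize a chain progressing along the shorter $u$--$v$ arc and sum the recovered sub-lengths,'' yet whether a given path in $G$ progresses along the shorter arc is precisely the kind of geometric information one is not entitled to use; recovery requires the answer to be a function of $\pi(G,u,v)$ only. The fix is the paper's formulation: define the output as the \emph{minimum}, over all $u$--$v$ paths in $G$, of the sum of the recovered cycle-lengths of their edges. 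By the triangle inequality in $\bbS_L$ every path gives a sum at least $\|u-v\|$, and the \gec\ property supplies a monotone path along the shorter arc achieving equality, so this manifestly isomorphism-invariant minimum equals $\|u-v\|$. With that single modification your argument is complete.
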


\begin{proof}
  Consider first the case when $u$ and $v$ are adjacent in $G$, so that \cref{prop:recover-distances} applies.
  It suffices to see that the mapping $x \mapsto (\lfloor x+kL \rfloor)_{k \ge 1}$ is injective on $[0,L)$.
  It is a well known fact that for irrational $L$ the fractional parts $(kL-\lfloor kL \rfloor)$ are dense in $[0,1]$.
  Let $0 \le x<y <L$.
  Since the fractional parts are dense, it follows that for some $k$ we have $\lfloor x+kL \rfloor \neq \lfloor y+kL \rfloor$, and the sequences differ.

  For any path in $G$, we can therefore recover from $G$ the total length of the edges along $\bbS_L$.
  If $u$ and $v$ are not adjacent, then there is a path in $G$ from $u$ to $v$ which moves around $\bbS_L$ in the shorter direction without backtracking.
  Since we can recover the cycle distance in each edge of the path, the sum is the distance from $u$ to $v$.
  Any other path in $G$ must give a larger sum.
  Thus we can recover $\|u-v\|$ as the minimal sum of cycle distances along paths from $u$ to $v$ in the graph.
\end{proof}

Since the cycle distance between any two vertices can be recovered from the graph, we have the following.

\begin{cor}\label{cor:recover-all-distances}
  Let $L>2$ be irrational, let $S,S'$ be dense countable subsets of $\bbS_L$ and $G \in \cG_{L,S}, G' \in \cG_{L,S'}$.
  If $f\colon S \to S'$ is a graph isomorphism between $G$ and $G'$, then it is an isometry between $S$ and $S'$.
\end{cor}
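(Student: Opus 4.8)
The plan is to apply Corollary~\ref{cor:recover-distances} directly, since it already isolates the essential content: that the cycle distance between two vertices is an invariant of the isomorphism type of the pointed graph. By that corollary there is a single function $F \colon \cG_{\bullet,\bullet} \to \R$ such that $\|u-v\| = F(\pi(G,u,v))$ for every graph in $\cG_L$ and every pair of its vertices. Because both $G \in \cG_{L,S}$ and $G' \in \cG_{L,S'}$ lie in $\cG_L$ (with the \emph{same} irrational $L$), the same $F$ recovers distances in both graphs; this is the only place where it matters that $G$ and $G'$ come from circles of equal circumference.

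First I would fix arbitrary vertices $u,v \in S$ and examine the two graphs with two distinguished vertices $(G,u,v)$ and $(G',f(u),f(v))$, where $u$ and $f(u)$ carry the label $1$ and $v$ and $f(v)$ carry the label $2$. Since $f$ is a graph isomorphism from $G$ to $G'$ sending $u \mapsto f(u)$ and $v \mapsto f(v)$, it furnishes an isomorphism of these pointed graphs that respects the distinguished labels. Hence the two pointed graphs have the same image under $\pi$, i.e.\ $\pi(G,u,v) = \pi(G',f(u),f(v))$. Applying $F$ and using the recovery identity for $G$ and for $G'$ then yields
\[ \|u-v\| = F(\pi(G,u,v)) = F(\pi(G',f(u),f(v))) = \|f(u)-f(v)\|. \]
As $u,v$ were arbitrary, $f$ preserves all pairwise distances and is therefore an isometry between $S$ and $S'$.

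There is no genuine obstacle remaining, as all the work has been carried out in Corollary~\ref{cor:recover-distances}. The only step deserving a moment's care is the bookkeeping of the distinguished labels: one must check that $f$ matches the first distinguished vertex of $(G,u,v)$ to the first distinguished vertex of $(G',f(u),f(v))$ and likewise for the second, so that $f$ really provides an isomorphism in $\cG_{L,\bullet,\bullet}$ that descends to an equality of images under $\pi$, rather than merely an isomorphism of the underlying unlabeled graphs. Once this is noted, the equality of distances is immediate and the corollary follows.
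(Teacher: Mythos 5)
Your proof is correct and is essentially the paper's own argument: the paper deduces this corollary immediately from \cref{cor:recover-distances}, noting only that ``the cycle distance between any two vertices can be recovered from the graph,'' which is precisely the factorization $\|u-v\| = F(\pi(G,u,v))$ you spell out. Your additional care about matching the distinguished labels under $f$ is sound but routine, and does not constitute a departure from the paper's approach.
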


\cref{cor:recover-all-distances} immediately implies \cref{thm:recover_S}.
We now prove part (1) of \cref{thm:main}.

\begin{proof}[Proof of \cref{thm:main}(1)]
  Let $G=G_{L,S,p}$ and $G'=G_{L,S',p'}$ be independent, as in the statement of the theorem. By \cref{lem:gec}, we have that $G \in \cG_{L,S}$ and $G' \in \cG_{L,S'}$ almost surely.
  Consider a bijection $f \colon S \to S'$.
  By \cref{cor:recover-all-distances}, if $f$ is not an isometry between $S$ and $S'$, then it is not an isomorphism between $G$ and $G'$.
  Thus it suffices to consider isometries $f$.
  There are at most countably many isometries between $S$ and $S'$ (for an arbitrary $v_0\in S$, there are at most two isometries for any given choice of $f(v_0)$).
  Since any fixed isometry $f$ is almost surely not an isomorphism between $G$ and $G'$,
  we conclude that there almost surely does not exist an isomorphism between $G$ and $G'$.
\end{proof}

\subsection{Proof of Proposition~\ref{prop:recover-distances}}
The overall strategy for the proof of \cref{prop:recover-distances} is as follows: we define a graph-theoretic notion of a cyclic ordering of vertices.
This notion, though defined completely in terms of the graph $G$, will be such that it guarantees that the corresponding points in $\bbS_L$ are cyclically ordered as well.
This will allow to define another graph-theoretic notion of a uni-directional path in $G$, which will correspond to a path in $\bbS_L$ that winds around the circle in a fixed direction.
We then show that any uni-directional path in $G$ has a well-defined (again, in terms of the graph) winding number which counts the number of times the corresponding path in $\bbS_L$ winds around the circle.
Finally, using this we deduce that from the graph $G$, for any two adjacent vertices $u,v \in G$, we may recover the sequence $(\lfloor \|u-v\| + kL \rfloor)_{k \ge 1}$, which is \cref{prop:recover-distances}.

Fix $L>2$, a dense countable subset $S \subset \bbS_L$ and a graph $G \in \cG_{L,S}$.
For $x,y \in S$ having $\|x-y\|<1$, let $A_{x,y}$ denote the set of points of $S$ in the shorter arc of $[x,y]$ and $[y,x]$.
It is convenient to include the endpoints $x,y$ in the arc.

The starting point of our argument is the following lemma which shows that the shortest arc between two adjacent vertices can in fact be described as a graph-theoretic property.
Its proof is postponed to the end of the section.

\begin{lemma}\label{lem:recover-arc}
  Let $a,b \in G$ be adjacent.
  Then $A_{a,b}$ can be recovered from $\pi(G,a,b)$.
\end{lemma}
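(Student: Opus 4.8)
The plan is to reduce the membership of a vertex $x$ in the short arc $A_{a,b}$ to a purely metric inclusion between balls on $\bbS_L$, and then to rephrase that inclusion using only graph distances, which are manifestly invariant under isomorphisms fixing the distinguished vertices. Write $d=\|a-b\|$; since $a,b$ are adjacent, $d<1$, and I may parametrise the short arc so that $a=0$ and $b=d$, with $A_{a,b}=[0,d]$. The geometric observation driving the proof is that, for every radius $r\ge 1$ with $2r+d<L$, one has $x\in A_{a,b}$ if and only if the arc $(x-r,x+r)$ is contained in $(a-r,a+r)\cup(b-r,b+r)$. Indeed, as $d<1\le 2r$ the two balls around $a$ and $b$ overlap, so their union is the single arc $(-r,d+r)$; a length-$2r$ arc $(x-r,x+r)$ fits inside the length-$(2r+d)$ arc $(-r,d+r)$ exactly when $0\le x\le d$, provided $2r+d<L$ so that the latter is a proper arc and no wrap-around occurs.

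To make this graph-theoretic I would invoke the distance equivalence recorded in \eqref{eq:floor-dist}, namely $\dist_G(u,v)\le k\iff\|u-v\|<k$ for every integer $k\ge 2$. Taking $r=2$ turns the inclusion into
\[
  x\in A_{a,b}\quad\iff\quad \forall y\in G\ \Big(\dist_G(x,y)\le 2\ \Rightarrow\ \dist_G(a,y)\le 2\ \text{ or }\ \dist_G(b,y)\le 2\Big).
\]
The right-hand side refers only to graph distances among $x,a,b$ and the other vertices, so it is a function of $\pi(G,a,b,x)$ alone, which is exactly what recoverability asks for. For correctness, the reverse implication is immediate, while the forward one uses that $S$ is dense: if $x\notin[0,d]$ then $(x-2,x+2)$ overhangs the arc $(-2,d+2)$, the overhang is a nonempty open arc lying outside $(-2,d+2)=(a-2,a+2)\cup(b-2,b+2)$, and any vertex $y$ it contains satisfies $\|y-x\|<2$ yet $\|y-a\|\ge 2$ and $\|y-b\|\ge 2$, so it witnesses the failure of the predicate. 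This argument is valid as soon as $4+d<L$; in particular it settles the lemma for all adjacent $a,b$ once $L>5$, and for $L=\infty$ (the line) it applies with no restriction at all.

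The main obstacle is therefore the regime of small $L$, say $2<L\le 5$, where the radius-$2$ predicate degenerates. The difficulty is sharpest when $L<4$: then the diameter $L/2$ is below $2$, every pair of vertices lies within cycle-distance $2$, and all graph distances collapse onto the two values $1$ and $2$, so the predicate above becomes vacuous. Enlarging the radius does not help, because any ball of radius at least $L/2$ is the whole circle. One is thus forced down to the unit scale, where the graph genuinely loses information: an edge certifies distance $<1$, but a non-edge is consistent with every distance, so the statement ``$\|y-a\|\ge 1$'' needed to detect the radius-$1$ overhang cannot be read off from the graph directly.

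To handle this regime I would keep the same overhang idea but witness it through the \gec\ property rather than through graph distances, since the geometric inclusion $(x-1,x+1)\subseteq(-1,d+1)$ still characterises $[0,d]$ as long as $2+d<L$. Concretely, when $x\notin[0,d]$ the overhang of the unit ball of $x$ is a nonempty arc lying at distance at least $1$ from both $a$ and $b$, and \gec\ guarantees vertices there with prescribed adjacencies. The technical heart of the lemma—and the step I expect to be the real work—is to certify, purely from the graph, that such a witness lies outside the unit balls of $a$ and $b$: since non-adjacency alone does not locate a vertex, this appears to require an auxiliary construction (for instance chains of common neighbours pinning a vertex to an intersection of several unit balls) together with a short case analysis on the value of $L$, which is presumably why the authors isolate and postpone this lemma.
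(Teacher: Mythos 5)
Your radius-$2$ argument is correct, but it only covers the regime $L>4+\|a-b\|$ (in particular $L\ge 5$), which is precisely what the paper dispenses with in a two-line warm-up remark at the start of its proof: there, $A_{a,b}$ is recovered for $L\ge 5$ as the intersection of the graph-distance-$2$ balls containing $\{a,b\}$, via \eqref{eq:floor-dist}. The entire content of the lemma lies in the range $2<L<5$, and there your proposal stops being a proof: you correctly identify the obstruction (for small $L$ all graph distances collapse to $1$ or $2$, and a non-edge carries no metric information), but the step you yourself call ``the real work'' --- certifying from the graph that a witness vertex lies \emph{outside} the unit balls of $a$ and $b$ --- is left as a hope about ``chains of common neighbours'' and an unspecified case analysis. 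That is exactly the missing idea, not a routine verification.

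The paper's solution to this is a concrete device you did not supply: call a finite set $U$ \emph{small} if some vertex of $G$ is adjacent to all of $U$, and \emph{large} otherwise; by unit threshold and \gec, $U$ is small if and only if $U$ is contained in an open arc of length $2$ (Lemma~\ref{lem:union-of-intervals}). This converts unit-scale geometry into isomorphism-invariant graph statements and replaces the unusable ``$\|y-a\|\ge 1$''. For $L\ge 3$ the paper then recovers $A_{a,b}$ by embedding $a,b$ in a path $(x,a,b,y)$ whose vertex set is large and comparing the sets $C(U)=\{w: U\cup\{w\}\text{ small}\}$; for $2<L<3$ it first recovers the indicator $\1_{\{\|u-v\|<\delta\}}$ with $\delta=L-2$ via small/large sets, recovers short arcs, and then chains them. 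Note also a defect in your sketch even granting the certification step: your unit-ball inclusion $(x-1,x+1)\subseteq(a-1,b+1)$ characterises $A_{a,b}$ only when $2+\|a-b\|<L$, which fails for adjacent pairs with $\|a-b\|\ge L-2$ whenever $L<3$; this is why the paper needs the additional chaining argument (Step 3 of its $2<L<3$ case), and your outline has no counterpart for it.
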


We say that a triplet $(a,b,c)$ of distinct points in $G$ is \textbf{cyclically ordered in $\bbS_L$} if $A_{a,b} \cap A_{b,c} = \{b\}$.
This means that when moving from $a$ to $b$ to $c$ along the cycle (always via the shorter arc), the direction of movement is maintained.
We say that a path $p=(v_0,\dots,v_n)$ in $G$ consisting of distinct vertices is a \textbf{uni-directional} if the triplet $(v_{i-1},v_i,v_{i+1})$ is cyclically ordered in $\bbS_L$ for each $1 \le i \le n-1$.
Thus, by going along the shorter arc between consecutive vertices, a uni-directional path in $G$ may be thought to correspond to a continuous path in $\bbS_L$ that always winds around the circle in a single direction.
In light of \cref{lem:recover-arc}, this property can be determined from the graph structure of $G$, so that we may talk about the path being uni-directional in $G$.
The \textbf{winding number} of a uni-directional path $p$ is defined as the number of complete revolutions its continuous counterpart makes around the cycle --
if its total cycle-length is $\ell$, this is $\lfloor \ell/L\rfloor$.
The winding number can also be calculated as the number of indices $1 \le i \le n-1$ such that $v_0 \in A_{v_i,v_{i+1}}$.
Consequently, the winding number of a uni-directional path $p$ can be recovered from $\pi(G,v_0,\dots,v_n)$.

It will also be useful to be able to identify the direction in which a uni-directional path winds around the circle;
by this we do not mean the absolute direction (clockwise/anticlockwise), but rather whether it goes from the start point to the end point by starting through the short/long arc between them.
This can be done in one of several ways.
We choose here a simple definition, which comes at the cost of requiring the start and end points to be at distance less than~1.
For $u,v \in S$, a \textbf{good path} from $u$ to $v$ is a uni-directional path $p=(x_0,x_1,\dots,x_n)$ in $G$ such that $x_0=u$, $x_n=v$ and $v\in A_{u,x_1}$.
Thus, a good path is required to go towards $v$ in the shorter direction, and overshoot $v$ in its first step.
In particular, its winding number is at least 1.
Of course, this is only possible if $\|u-v\|<1$.
The following shows that good paths exist.

\begin{lemma}\label{lem:good_paths_exist}
  Let $k \ge 1$ and $u,v \in G$ be such that $\|u-v\|<1$.
  Then $G$ contains a good path from $u$ to $v$ with winding number $k$ and length $n = \lfloor \|u-v\| + kL \rfloor + 1$.
  Moreover, there is no good path from $u$ to $v$ with winding number $k$ and length less than $n$.
\end{lemma}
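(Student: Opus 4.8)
The plan is to treat the two assertions—existence of a good path of the stated length and non-existence of a shorter one—separately, with the minimality half nearly immediate and the existence half carrying the real work. Set $\ell := \|u-v\| + kL$, so that the target length is $n=\lfloor \ell\rfloor+1$. For minimality, I would first show that \emph{every} good path $p=(x_0,\dots,x_m)$ from $u$ to $v$ with winding number $k$ has total cycle-length exactly $\ell$. Lift the uni-directional path to $\R$ with $x_0=u$ at $0$ and the winding direction positive; since $p$ is a good path its first step overshoots $v$ through the short arc, so $v$ has real coordinate $\|u-v\|>0$ and every occurrence of $v$ lifts to $\|u-v\|+jL$. As the lift is monotone, ends at $v$, and already satisfies $s_1>\|u-v\|$, the endpoint lift is $\|u-v\|+k'L$ for some integer $k'\ge 1$; by the length characterization of the winding number, $k'=\lfloor(\|u-v\|+k'L)/L\rfloor$ equals the winding number, forcing $k'=k$ and total length exactly $\ell$. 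Since $G$ has unit threshold, each of the $m$ steps has cycle-length strictly below $1$, so $m>\ell$ and hence $m\ge \lfloor\ell\rfloor+1=n$. This settles minimality and confirms $n$ as the correct length.

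For existence I would first lay out an ideal path in $\R$ and then realize it in $G$. Choose reals $0=t_0<t_1<\dots<t_n=\ell$ with all consecutive gaps strictly below $1$, taking $t_1\in(\max(\|u-v\|,\ \ell-\lfloor\ell\rfloor),\,1)$ and then partitioning $[t_1,\ell]$ into $n-1$ intervals each of length less than $1$; the interval for $t_1$ is nonempty because both lower bounds are below $1$, and the partition is feasible precisely because $\ell-t_1<n-1=\lfloor\ell\rfloor$. Projecting to $\bbS_L$, this is a monotone winding configuration: the short arc from $t_0$ to $t_1$ contains $v$ (overshoot), every consecutive triple is cyclically ordered, and the coordinate of $u$ lies strictly interior to the short arc of exactly $k$ consecutive pairs, namely at the $k$ multiples $L,2L,\dots,kL$ lying in $(0,\ell)$, no gap containing two since each gap is below $1<L$. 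Arranging in addition that no $t_i$ with $i\ge 1$ is an exact multiple of $L$, there is $\delta>0$ such that moving each interior point by less than $\delta$ preserves all of these strict conditions—uni-directionality, the overshoot $v\in A_{u,x_1}$, every consecutive distance below $1$, and each indicator $u\in A_{x_i,x_{i+1}}$, hence the winding number $k$.

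The final and most delicate step is to promote this configuration to an honest path in $G$, which I would do greedily. Having fixed $x_0=u,x_1,\dots,x_i$, I apply the \gec\ property at the point $s$ obtained by projecting $t_{i+1}$, with $A=\{x_i\}$ for an interior vertex and $A=\{x_{n-2},v\}$ for the last interior vertex (legitimate, since both $x_{n-2}$ and $v$ lie within distance $1$ of $s$ once $\delta$ is small), with $B$ listing the previously chosen vertices, and with $\eps<\delta$ small enough that perturbed consecutive distances stay below $1$; this yields a fresh vertex of $S$ within $\eps$ of $t_{i+1}$ adjacent to the prescribed neighbors, and density makes distinctness automatic. The outcome is a path $u=x_0,\dots,x_n=v$ of distinct vertices with all consecutive edges present, lying within $\delta$ of the ideal configuration and therefore a good path of winding number $k$ and length $n$. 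I expect the main obstacle to be exactly this realization: one must check that the purely metric conditions defining a good path of winding number $k$ are stable under sufficiently small perturbations, and that the \emph{fixed} endpoints $u,v$—which cannot be moved—can still be joined, the latter resolved by feeding $v$ (and initially $u$) into the set $A$ of the relevant \gec\ invocation rather than trying to perturb it.
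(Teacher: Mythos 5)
Your proposal is correct and takes essentially the same route as the paper: prescribe ideal positions along a monotone winding of total cycle-length $\|u-v\|+kL$ whose first step overshoots $v$ and whose gaps are all below $1$, realize them as an actual path by inductive applications of the \gec\ property (with the last interior vertex required adjacent to both its predecessor and $v$), and get minimality from the fact that unit threshold forces any such path of winding number $k$ to have more than $\|u-v\|+kL$ edges. The only cosmetic differences are that the paper spaces the intermediate points equally (rather than via an arbitrary fine partition) and treats the winding-number verification and the distinctness/perturbation bookkeeping more tersely.
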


\begin{proof}
  For concreteness, we assume that the short arc from $u$ to $v$ is in the positive direction, so that $v = u + \|u-v\|$.
  Set $\ell:=\|u-v\|$, so that we seek a path of length $n = \lfloor \ell+kL \rfloor + 1$.
  We start by specifying approximate locations for the points of the path.
  These approximate locations, denoted $x_i$, are points of the circle and need not be vertices of the graph.
  Let $x_1 = u+t$ for some $t\in(\ell,1)$, so that $v\in A_{u,x_1}$ and $\|x_1-u\| < 1$.
  The total cycle-length of the path will be $\ell + kL$, and the remaining points $(x_i)_{i=2}^{n-1}$ will be equally spaced with gap
  \[ \Delta = \frac{\ell + kL - t}{n-1}. \]
  Thus, the approximate points are $x_i = u+t+\Delta(i-1)$ for $1 \le i \le n-1$.
  Note that for $t$ close enough to $1$, we have $\Delta<1$, since $\ell+kL < n$.

  Fix $\eps>0$ such that $\max\{t,\Delta\} < 1-2\eps$,
  and set $U_0 := \{u\}$, $U_n := \{v\}$ and $U_i := S \cap (x_i-\eps, x_i+\eps)$ for $1 \le i \le n-1$.
  This choice guarantees that any point in $U_i$ is at distance less than 1 from any point in $U_{i-1}$, and the shorter arc between them is positively oriented.
  Since $G$ is \gec, there exists $u_1 \in U_1$ such that $u_1$ is adjacent to $u_0:=u$ in $G$.
  Continuing by induction, we see that there exists a sequence $(u_i)_{0 \le i \le n-2}$ such that $u_i \in U_i \setminus \{v,u_0,\dots,u_{i-1}\}$ and $u_i$ is adjacent to $u_{i-1}$ in $G$.
  Finally, there exists $u_{n-1} \in U_{n-1} \setminus \{v,u_0,\dots,u_{n-2}\}$ which is adjacent to both $u_{n-2}$ and $u_n := v$.
  By construction, $(u_0,\dots,u_n)$ is a good path in $G$ from $u$ to $v$ of length $n$ and winding number $k$.
  
  Finally, there can be no uni-directional path (good or otherwise) from $u$ to $v$ with winding number $k$ and length at most $\lfloor \ell+kL \rfloor$, since the total length of the arcs in such a path is smaller than $\ell+kL$.
\end{proof}

We are now ready to prove Proposition~\ref{prop:recover-distances}.

\begin{proof}[Proof of \cref{prop:recover-distances}]
  Let $u,v \in G$ be adjacent and fix $k \ge 1$.
  Our goal is to recover $\ell_k := \lfloor \|u-v\| + kL \rfloor$ from $\pi(G,u,v)$.
  By \cref{lem:good_paths_exist}, $\ell_k+1$ is the shortest length of a good path from $u$ to $v$ with winding number $k$.
  Since whether a path $(x_0,\dots,x_n)$ is good can be determined from $\pi(G,x_0,\dots,x_n)$, this shows that $\ell_k$ can be recovered from $\pi(G,u,v)$.
\end{proof}

\subsection{Proof of \cref{lem:recover-arc}}

Let $a,b \in G$ be adjacent.
Recall that $A_{a,b}$ is the set of points of $S$ in the shorter arc of $[a,b]$ or $[b,a]$.
As a warm-up, the reader may find it instructive to note that, when $L \ge 5$, one may easily recover $A_{a,b}$ as the intersection over $v \in S$ of all intervals $(v-2,v+2) \cap S$ which contain $\{a,b\}$.
By~\eqref{eq:floor-dist}, each such interval can be recovered as the set of vertices at graph-distance at most $2$ from $v$.

When $L \ge 3$, one may try a similar approach, recovering $A_{a,b}$ as the intersection over $v \in S$ of all intervals $(v-1,v+1) \cap S$ which contain $\{a,b\}$.
Indeed, it is not hard to check that this produces the correct set, however, as~\eqref{eq:floor-dist} does not allow to recover intervals of the form $(v-1,v+1) \cap S$, we will need to slightly modify the approach.

The proof is split into two cases according to whether $L \ge 3$ or $2<L<3$.
The following lemma will be useful in both cases.
Say that a set $U \subset G$ is \textbf{small} if it is finite and some vertex of $G$ is adjacent to every vertex in $U$. Say that $U$ is \textbf{large} if it is finite and not small, i.e., no vertex of $G$ is adjacent to every vertex in $U$.
The following simple lemma shows that a finite set is small if and only if it is contained in some open arc of length $2$.
Equivalently, a finite set is large if and only if it leaves no gap of length greater than $L-2$ in its complement.

\begin{lemma}\label{lem:union-of-intervals}
  Let $U \subset G$ be finite. 
  Then $U$ is small if and only if $U \subset (v-1,v+1)$ for some $v \in G$.
\end{lemma}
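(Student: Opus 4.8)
The plan is to prove the two implications separately, using the unit-threshold property for the forward direction and the \gec\ property for the reverse. The key observation underpinning both is that, since $L>2$, the open arc $(v-1,v+1)$ coincides exactly with the open unit ball $\{x \in \bbS_L : \|x-v\|<1\}$ around $v$, and that in each direction the centre of the relevant arc is a vertex of $G$ — which is precisely what makes the two arguments dovetail.

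First I would treat the forward implication. Suppose $U$ is small, so that some vertex $w \in G$ is adjacent to every vertex of $U$. Since $G$ has unit threshold, each $u \in U$ satisfies $\|u-w\|<1$, i.e. $u \in (w-1,w+1)$. Hence $U \subset (w-1,w+1)$, and taking $v=w$ (a vertex of $G$) yields the desired containment. This direction uses nothing beyond the definition of small and the unit-threshold property.

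For the reverse implication, suppose $U \subset (v-1,v+1)$ for some $v \in G$; equivalently, $U$ lies in the open unit ball around the vertex $v \in S$. Here I would invoke the \gec\ property with base point $s=v$, sets $A=U$ and $B=\emptyset$, and any $\epsilon$ (say $\epsilon=1$): since $A=U$ is a finite set contained in $(v-1,v+1)$ and $B$ is empty (so the disjointness requirement is vacuous), \gec\ furnishes a vertex $w \in S \setminus U$ adjacent to every vertex of $U$. This exhibits a common neighbour of $U$, so $U$ is small.

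I do not expect a genuine obstacle: the lemma is essentially a dictionary between the combinatorial notions small/large and the geometric notion of being covered by a length-$2$ arc, and each direction is a one-line application of one of the two defining properties of $\cG_{L,S}$. The only point needing a little care is the identification of $(v-1,v+1)$ with the open unit ball, which relies on $L>2$ so that the arc has length strictly less than $L$ (the boundary value $L=2$ falls into the Rado regime treated separately). It is also worth flagging why the centre must be a vertex: the unit-threshold argument automatically delivers a \emph{vertex}-centred arc, while \gec\ requires its base point to be a vertex, so the formal statement with $v \in G$ is exactly the form both arguments can meet.
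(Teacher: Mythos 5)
Your proof is correct and follows essentially the same route as the paper: the forward direction via unit threshold (the common neighbour $w$ serves as the centre $v$), and the reverse direction via a direct application of \gec\ with $A=U$ and $B=\emptyset$. The extra care you take in identifying $(v-1,v+1)$ with the open unit ball and in noting that the centre is a vertex is fine but not needed beyond what the paper's two-line argument already uses.
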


\begin{proof}
Suppose first that $U$ is small. By definition, there exists a vertex $w \in G$ that is adjacent to every vertex in $U$. Since $G$ has unit threshold, $\|u-w\|<1$ for all $u \in U$. Thus, $U \subset (w-1,w+1)$, as required.

Suppose now that $U \subset (v-1,v+1)$ for some $v \in G$. Since $G$ is \gec, there exists a vertex $w \in G$ which is adjacent to all of $U$. Thus, $U$ is small, as required.
\end{proof}

\subsubsection*{Proof of \cref{lem:recover-arc} when $L \ge 3$.}

\paragraph{Step 1.}
\emph{Let $(a,b,c,d)$ be a path in $G$ and suppose that $\{a,b,c,d\}$ is large. Then $A_{b,c}$ can be recovered from $\pi(G,a,b,c,d)$.}
\smallskip

We first show that such a path $(a,b,c,d)$ must be uni-directional.
Indeed, if the arcs $(a,b)$ and $(b,c)$ are in opposite directions in $\bbS_L$, then $\{a,b,c,d\} \subset (c-1,c+1)$, and so the set is small.
Similarly, if $(b,c)$ and $(c,d)$ are in opposite directions, then $\{a,b,c,d\} \subset (b-1,b+1)$. This shows that $\{a,b,c,d\}$ are distinct vertices and that $A_{a,b} \cap A_{b,c} = \{b\}$ and $A_{b,c} \cap A_{c,d}=\{c\}$.

For a finite $U \subset G$, we denote
\[ C(U) := \{ w \in G :  U \cup \{w\}\text{ is small} \}.\]
We will show that $A_{a,b} \cup A_{b,c}$ is precisely the set
\[ W := \{ w \in G : C(\{a,b,c\}) = C(\{a,b,c,w\}) \} .\]
Since $A_{b,c} \cup A_{c,d}$ is similarly obtained, this will show that we can determine $A_{b,c}$ as 
\[ A_{b,c} = (A_{a,b} \cup A_{b,c}) \cap (A_{b,c} \cup A_{c,d}). \]

Before showing that $A_{a,b} \cup A_{b,c} = W$, we first observe that any interval $(v-1,v+1)$ containing $\{a,b,c\}$, contains $A_{a,b} \cup A_{b,c}$ and does not contain $d$.
Indeed, no such interval contains $\{a,b,c,d\}$ since $\{a,b,c,d\}$ is large.
Suppose that $(a,b,c,d)$ winds in the positive direction (the other case being similar).
Since $L\geq 3$, the circle is a disjoint union of the arcs $[a,b),[b,c),[c,d)$, and $[d,a)$.
Thus any interval in the circle that contains $a,b,c$ but not $d$ also contains $[a,b]\cup[b,c]$.
Since $[a,b]$ is the short arc between $a,b$, we have $A_{a,b} = [a,b]\cap S$, and similarly for $A_{b,c}$.
Thus any such interval $(v-1,v+1)$ must contain $A_{a,b}\cup A_{b,c}$.

To see that $A_{a,b} \cup A_{b,c} \subset W$, fix $w \in A_{a,b} \cup A_{b,c}$ and let us show that $C(\{a,b,c\}) = C(\{a,b,c,w\})$.
The containment $C(\{a,b,c,w\}) \subset C(\{a,b,c\})$ is clear, and the opposite containment follows from the fact that any interval $(v-1,v+1)$ containing $\{a,b,c\}$ also contains $A_{a,b} \cup A_{b,c}$.

To see that $W \subset A_{a,b} \cup A_{b,c}$, let $w \in W$ and suppose towards a contradiction that $w \notin A_{a,b} \cup A_{b,c}$.
In order to reach a contradiction with the fact that $C(\{a,b,c\}) = C(\{a,b,c,w\})$, it suffices to find a vertex of $G$ which belongs to an interval $(v-1,v+1)$ containing $\{a,b,c\}$, but does not belong to any interval $(v-1,v+1)$ containing $\{a,b,c,w\}$.

Recall that $(a,b,c,d)$ is a uni-directional path, and note that one of $(a,b,c,d,w)$ or $(a,b,c,w,d)$ is also a uni-directional path. Suppose for concreteness that it is the latter, and that moreover, $(a,b,c,w,d)$ winds around the cycle in the positive direction. In particular, $A_{a,b}=[a,b] \cap S$ and $A_{b,c}=[b,c] \cap S$.
Moreover, $A := [c,w] \cap S$ is the arc between $c$ and $w$, which does not contain $a$, $b$ or $d$ (it is not necessarily the short arc between $c$ and $w$).
Note that, since any interval $(v-1,v+1)$ containing $\{a,b,c\}$ must contain $A_{a,b} \cup A_{b,c}$ and cannot contain $d$, it follows that any interval $(v-1,v+1)$ containing $\{a,b,c,w\}$ must contain $A_{a,b} \cup A_{b,c} \cup A = [a,w] \cap S$.
Observe also that any such interval is contained in $(w-2,a+2)$.
However, if $v \in (c-1,w-1)$, then the interval $(v-1,v+1)$ contains $\{a,b,c\}$, but is not contained in $(w-2,a+2)$ (note that the latter is not all of $\bbS_L$ since $(a,w)$ is longer than $(a,c)$, which in turn has length more than 1).
We have thus reached a contradiction.

\paragraph{Step 2.} 
\emph{Let $a,b \in G$ be adjacent. Then $A_{a,b}$ can be recovered from $\pi(G,a,b)$.}
\smallskip

In light of the previous step, it suffices to show that there exists a path $(x,a,b,y)$ so that $\{x,a,b,y\}$ is large.
Denote $\ell := \|a-b\|$ and suppose they are oriented so that $b=a+\ell$.
There exists $x \in (a-1,a-1+\ell/2)$ adjacent to $a$, and similarly, there exists $y \in (b+1-\ell/2,b+1)$ adjacent to~$b$.
Since $L \ge 3$, the only way $\{x,a,b,y\}$ is contained in an open arc of length $2$ is if $y-x<2$, which is not the case by our choice of $x$ and $y$.

\subsubsection*{Proof of Lemma~\ref{lem:recover-arc} when $2<L<3$.}

We write $L=2+\delta$ for some $\delta \in (0,1)$.
The reader may keep in mind that small $\delta$ is the more difficult case.

\paragraph{Step 1.}
\emph{Let $u,v \in G$. Then $\1_{\{\|u-v\|<\delta\}}$ can be recovered from $\pi(G,u,v)$.}
\smallskip

This will follow if we show that
\[ \|u-v\| < \delta \quad\iff\quad U \cup \{u\}\text{ and }U \cup \{v\}\text{ are large for some small }U .\]

Suppose first that $U \cup \{u\}$ and $U \cup \{v\}$ are large for some small $U$. Let us show that $\|u-v\|<\delta$.
Let $\{u^-,u^+\}$ be the two vertices of $U$ nearest to $u$ from either side.
Recall that a finite set is large if and only if it leaves no gap of length greater than $\delta=L-2$ in its complement. Therefore, since $U \cup \{u\}$ is large, $(u^-,u)$ and $(u,u^+)$ each has arc-length at most $\delta$. Since $U \cup \{v\}$ is large, but $U$ is small, it must be that $v \in (u^-,u^+)$ so that $\|u-v\| < \delta$, as required. 

Suppose next that $\|u-v\|<\delta$. Let us show that $U \cup \{u\}$ and $U \cup \{v\}$ are large for some small $U$.
Assume without loss of generality that $u \in (v,v+\delta)$, and let $0<\eps < (\delta - \|u-v\|)/3$. Let $V$ be the arc $(v+\delta-\eps,v-2\eps)$. Note that $v \notin V$ and $|V|=L-(\delta+\epsilon)=2-\eps<2$.
Recall that a finite set is small if and only if it is contained in some open arc of length $2$.
Thus, any finite subset of $V$ is small.
Since $(v-2\eps,v)$ and $(v,v+\delta-\eps)$ each has arc-length less than $\delta$, it follows that $U_1 \cup \{v\}$ is large for some finite $U_1 \subset V$. Similarly, since $(v-2\eps,u)$ and $(u,v+\delta-\eps)$ each has arc-length less than $\delta$, we have that $U_2 \cup \{u\}$ is large for some finite $U_2 \subset V$. Thus, $U:=U_1 \cup U_2$ is a small set such that both $U \cup \{v\}$ and $U \cup \{u\}$ are large, as required.

\paragraph{Step 2.}
\emph{Let $a,b \in G$ satisfy $\|a-b\|<\delta$. Then $A_{a,b}$ can be recovered from $\pi(G,a,b)$.}
\smallskip

By the first step, $A_v := (v-\delta,v+\delta) \cap S$ can be recovered from $\pi(G,v)$.
Let $W$ be the intersection of all $A_v$ containing $\{a,b\}$.
We claim that $A_{a,b} = W$.

To see that $A_{a,b} \subset W$, we must show that $A_v$ contains $A_{a,b}$ whenever it contains $\{a,b\}$.
Indeed, if $\{a,b\} \subset A_v$ then, since $A_v$ has arc-length $2\delta$, $\|a-b\|<\delta$ and $3\delta<L$, it follows that $A_{a,b} \subset A_v$. 

To see that $W \subset A_{a,b}$, we must show that for any $u \in G \setminus A_{a,b}$ there exists $v \in G$ such that $\{a,b\} \subset A_v$ and $u \notin A_v$.
Since $2\delta<L$, it is straightforward that such a $v$ exists.

\paragraph{Step 3.} 
\emph{Let $a,b \in G$ be adjacent. Then $A_{a,b}$ can be recovered from $\pi(G,a,b)$.}
\smallskip

Set $n := \lceil 1/\delta \rceil$, so that $1\le \delta n < 1+\delta$.
Since $\|a-b\| < 1 \le \delta n$, \gec\ implies that
there exists a uni-directional path $P= (u_0,\dots,u_n)$ in $G$ such that $u_0=a$, $u_n=b$, and $\|u_i-u_{i-1}\|<\delta$ for all $i$.
Since the long arc from $a$ to $b$ has length larger than $1+\delta$, and since the total cycle-length of $P$ is at most $\delta n < 1+\delta$, it must be that $A_{a,b} = A_{u_0,u_1} \cup \dots \cup A_{u_{n-1},u_n}$. Thus, by the previous steps, one can recover $A_{a,b}$ from $\pi(G,a,b)$.

\section{Constructing an isomorphism for rational $L$}

In this section, we prove part (2) of Theorem~\ref{thm:main}, namely, that $G_{L,S,p}$ and $G_{L,S',p'}$ are almost surely isomorphic when $L$ is rational and $S$ and $S'$ are integer-distance-free.
In light of Lemma~\ref{lem:gec}, this is an immediate consequence of the following. Let $\cG_{L,\text{idf}} \subset \cG_L$ consist of those graphs whose vertex sets are integer-distance-free.

\begin{prop}\label{prop:isomorphic}
Let $L>2$ be rational and let $G,G' \in \cG_{L,\text{idf}}$. Then $G \cong G'$.
\end{prop}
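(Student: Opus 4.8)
The plan is to construct an explicit isomorphism by a back-and-forth argument, building an increasing chain of finite partial maps whose union is a graph isomorphism from $S$ to $S'$ (here $S,S'$ are the integer-distance-free vertex sets of $G,G'$). Fix enumerations of $S$ and $S'$, and write $L=\ell/m$ in lowest terms. For a pair $u,v$ call $\lfloor m\|u-v\|\rfloor$ its \emph{cell}. Since $S$ and $S'$ are integer-distance-free, no pairwise distance is a multiple of $1/m$; hence every such distance lies strictly between consecutive multiples of $1/m$, and two points are within distance $1$ exactly when their cell is $<m$ (the value $1=m/m$ being excluded). I would maintain the invariant that each partial map $\phi\colon F\to F'$ (with $F\subset S$, $F'\subset S'$ finite) is a \emph{geometric} partial isomorphism: a bijection preserving (i) adjacency, (ii) the cyclic order of the points on the circle, and (iii) the cell of every pair. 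By \cref{lem:recover-arc} and \cref{prop:recover-distances} the data in (ii) and (iii) are recoverable from the graph, so any genuine graph isomorphism automatically respects them; thus the union of such maps is a graph isomorphism, which is all that \cref{prop:isomorphic} requires.

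Everything reduces to an extension step: given a geometric partial isomorphism $\phi\colon F\to F'$ and a point $x\in S\setminus F$, one can choose $x'\in S'\setminus F'$ so that $\phi\cup\{(x,x')\}$ is again geometric (and, by symmetry, the same statement holds with the roles of $S$ and $S'$ reversed, which handles the backward direction). I would carry this out in two stages. First, using (ii) and (iii), identify the target region $R\subset\bbS_L$ consisting of all points whose cyclic position and cells relative to $F'$ match those of $x$ relative to $F$. Second, assuming $R$ is a nonempty open arc, realize the adjacencies: set $A=\phi(\{y\in F: x\sim y\})$ and $B=\phi(\{y\in F: \|x-y\|<1,\ x\not\sim y\})$, the images of the neighbours and the close non-neighbours of $x$. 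Choose a vertex $s'\in S'\cap R$ (nonempty since $S'$ is dense and $R$ is a nonempty open arc); then every point of $A\cup B$ has cell $<m$ to $s'$, hence lies in $(s'-1,s'+1)$, so the \gec\ property produces a vertex $x'\in S'$ within $\eps$ of $s'$ adjacent to all of $A$ and to none of $B$. Taking $\eps$ smaller than the distance from $s'$ to all relevant grid points $\phi(y)\pm j/m$ keeps $x'$ inside $R$, so its cells to $F'$ are correct; meanwhile the points of $F'$ at cell $\ge m$ from $s'$ stay at distance $>1$ from $x'$ (distances in $S'$ avoid $1$) and so remain non-adjacent, as needed.

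The crux, and the step I expect to be the main obstacle, is showing that $R$ is nonempty, i.e.\ that the geometric type realized by $x$ relative to $F$ is also realized relative to $F'$. The difficulty is that preserving cells does \emph{not} fix the positions of the points modulo $1/m$: two configurations with identical cell data may have their $\tfrac1m$-grids offset differently, so a priori the attainable cell-profiles could differ between $F$ and $F'$. I would resolve this by a sweeping argument. For each $y$, as a candidate point $s$ moves around the circle its cell to $y$ changes by $\pm1$ exactly when $s$ crosses a point of the grid $y+\tfrac1m\Z$; by integer-distance-freeness these grids are pairwise disjoint, so together they partition the circle into arcs on which the whole cell-profile is constant and across whose boundaries it changes in a single coordinate. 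The content is that the cyclic pattern of these transitions, and hence the set of cell-profiles attainable in each gap between consecutive vertices, depends only on the cyclic order and the pairwise cells and not on the finer offsets. The base cases $|F|\le 2$ can be checked directly (for two points, both possible values of the complementary cell are attainable irrespective of the offset), and the general case follows by tracking how the partition is refined as each further vertex is inserted. Granting this, $x$'s profile is attainable on the $F'$ side because it is attainable on the $F$ side and the two sides carry the same cell data, so $R\neq\emptyset$ and the back-and-forth goes through, yielding $G\cong G'$.
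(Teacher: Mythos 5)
You correctly isolate the crux of your argument --- the nonemptiness of the target region $R$, i.e.\ the claim that the attainable cell-profiles are determined by the cyclic order together with the pairwise cells --- but that claim is false, and the proposed sweeping argument cannot establish it. Concretely, take $L=\tfrac{10}{3}$ (so $\ell=10$, $m=3$) and consider the labeled configurations $F=\{x_0,x_1,x_2\}=\{0,\,0.1,\,1.5\}$ and $F'=\{x_0',x_1',x_2'\}=\{0,\,0.1,\,1.7\}$ in $\bbS_{10/3}$. Both are integer-distance-free, both have the same cyclic order, and the pairwise cells agree: $\lfloor 3\cdot 0.1\rfloor=0$ for both first pairs; $\lfloor 3\cdot\|x_0-x_2\|\rfloor=\lfloor 4.5\rfloor=4$ and $\lfloor 3\cdot\|x_0'-x_2'\|\rfloor=\lfloor 3\cdot\tfrac{49}{30}\rfloor=\lfloor 4.9\rfloor=4$; $\lfloor 3\cdot 1.4\rfloor=4$ and $\lfloor 3\cdot 1.6\rfloor=4$. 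Adjacency can be matched too, since in each configuration only the pair at distance $0.1$ is within distance $1$; so $x_i\mapsto x_i'$ is a legitimate partial map for your invariant, reachable by your own back-and-forth. Yet the point $x=0.25$ has cells $(0,0,3)$ relative to $F$ (distances $0.25,\,0.15,\,1.25$), while \emph{no point of the circle} has cells $(0,0,3)$ relative to $F'$: any $y$ with cell $0$ to both $0$ and $0.1$ lies in the arc $(-\tfrac{7}{30},\tfrac13)$, and for every such $y$ one has $\|y-1.7\|\ge\min\bigl(\tfrac{49}{30}-\tfrac{7}{30},\,1.7-\tfrac13\bigr)=\tfrac{41}{30}>\tfrac43$, so its cell to $1.7$ is at least $4$. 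Hence $R=\emptyset$, and the induction gets stuck at any vertex of $S$ near $0.25$ (which exists by density).

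The failure mechanism is exactly the offset issue you flagged: cells are symmetric under $j\mapsto \ell-1-j$, so for a near-antipodal pair (here cell $4=\min(4,5)$) the cells plus cyclic order cannot tell whether the positive arc from $x_0$ to $x_2$ crosses $4$ or $5$ grid points; $F$ and $F'$ differ in precisely this datum ($\lfloor 3\cdot1.5\rfloor=4$ versus $\lfloor 3\cdot 1.7\rfloor=5$), their residues $3x_i\bmod 1$ have different cyclic patterns ($0,0.3,0.5$ versus $0,0.3,0.1$), and consequently the transition structure of the sweep genuinely depends on the offsets, contrary to your claim. This is why the paper strengthens the invariant: it translates both vertex sets to contain $0$, anchors $f(0)=0$, and maintains an \emph{extended step-isometry}, preserving $q_v:=\lfloor mv\rfloor$ exactly and the order of the residues $r_v:=v-q_v/m$. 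That data determines your cells and cyclic order but not conversely (it forbids $1.5\mapsto 1.7$, since $q_{1.5}=4\neq 5=q_{1.7}$), and it makes the extension step trivial: the admissible images form a nonempty open interval $\tfrac{q_s}{m}+(a,b)$, after which the \gec\ property realizes the adjacencies much as in your second paragraph. So your outer scaffolding (back-and-forth plus \gec) matches the paper's, but the invariant you propose to maintain is too weak for the induction to close, and the statement you reduce everything to is simply not true.
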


Bonato and Janssen proved the analogous statement for the case $L=\infty$ (corresponding to the real line) \cite[Theorem~3.3]{bonato2011infinite}. Our construction follows similar lines.

Let $G \in \cG_{L,S}$ and $G' \in \cG_{L,S'}$, where $S$ and $S'$ are integer-distance-free.
Our goal is to construct an isomorphism $f$ between $G$ and $G'$.
In fact, we have the flexibility to map one vertex of $G$ to an arbitrary vertex of $G'$.
Thus, by translating $S$ and $S'$ if necessary, we may assume without loss of generality that both $S$ and $S'$ contain $0$, and we aim to construct an isomorphism $f \colon S \to S'$ such that $f(0)=0$.

For a point $v \in \bbS_L$, consider the sequence $( \lfloor v + kL \rfloor )_{k \ge 0}$.
We have seen in the previous section that a similar sequence can be recovered from $\pi(G,v,0)$. Thus, when trying to construct an isomorphism between $G$ and $G'$, we shall want to preserve this sequence. That is, we will always map a vertex $v \in S$ to a vertex $v' \in S'$ in such a way that $( \lfloor v + kL \rfloor )_{k \ge 0} = ( \lfloor v' + kL \rfloor )_{k \ge 0}$.
A key observation is that, since $L=\ell/m$ is rational, $(\lfloor v + kL \rfloor)_{k \ge 0}$ is determined by its first $m$ elements.
More precisely, the sequence is periodic after subtracting the fixed sequence $(\lfloor kL \rfloor)_{k \ge 0}$.
Since there are only finitely many possibilities for the sequence, there are  many candidates for such a $v' \in S'$.

To be more precise, let
\[ L= \frac \ell m \]
be irreducible. Then there are precisely $\ell$ possibilities for the sequence $( \lfloor v + kL \rfloor )_{k \ge 0}$, according to the value of $\lfloor mv \rfloor$. We thus partition $\bbS_L$ into the arcs
\[ [\tfrac im, \tfrac{i+1}m) \qquad\text{for }0 \le i \le \ell - 1 .\]
Note that two points $u \in [\frac im, \frac{i+1}m)$ and $v \in [\frac jm, \frac{j+1}m)$ satisfy $( \lfloor u + kL \rfloor )_{k \ge 0}=( \lfloor v + kL \rfloor )_{k \ge 0}$ if and only if $i=j$.
We henceforth let $q_v \in \{0,\dots,\ell-1\}$ and $r_v \in [0,\frac 1m)$ be the unique numbers such that
\[ v = \frac{q_v}m + r_v .\]
Note that $q_v = \lfloor mv \rfloor$ and $v \in [\frac{q_v}m,\frac{q_v+1}m)$.
Moreover, since $S$ is integer-distance-free, $r_v\neq r_u$ for distinct $u,v\in S$, and in particular $r_v\neq 0$ for $v\neq 0$ in $S$, and similarly for $S'$.

Let $\bar{S} \subset S$ and $\bar{S}' \subset S'$, and suppose that both contain $0$.
A bijection $f \colon \bar{S} \to \bar{S}'$ is called an \textbf{extended step-isometry} if $f(0)=0$ and, for every $u,v \in \bar{S}$, we have
\[ q_u = q_{f(u)} \qquad\qquad\text{and}\qquad\qquad r_u < r_v \iff r_{f(u)} < r_{f(v)} .\]
Though we will not require it, we note that such an extended step-isometry $f$ satisfies that
\[ \lfloor m \cdot \|u-v\| \rfloor = \lfloor m \cdot \|f(u)-f(v)\| \rfloor \qquad\text{for all }u,v \in \bar{S} .\]

\begin{proof}[Proof of \cref{prop:isomorphic}]
  Let $S=\{s_n\}_{n \ge 0}$ and $S'=\{s'_n\}_{n \ge 0}$, where $s_0=s'_0=0$.
  Set $S_0=S'_0=\{0\}$ and let $f_0 \colon S_0 \to S'_0$ satisfy $f_0(0)=0$.
  For $n \ge 1$, we inductively define a pair of finite sets $(S_n,S'_n)$ and a bijection $f_n\colon S_n \to S'_n$ such that
  \begin{itemize}[nosep]
  \item $S_{n-1} \subset S_n \subset S$ and $S'_{n-1} \subset S'_n \subset S'$,
  \item $s_n \in S_n$ and $s'_n \in S'_n$,
  \item $f_{n-1}$ is the restriction of $f_n$ to $S_{n-1}$,
  \item $f_n$ is an extended step-isometry and an isomorphism between $G[S_n]$ and $G'[S'_n]$, where $G[U]$ denotes the subgraph of $G$ induced by $U$.
  \end{itemize}
  The limiting function $\bigcup_n f_n$ will then be the desired isomorphism.

  Fix $n \ge 0$ and suppose that we have already constructed $(S_n,S'_n)$ and $f_n$. To construct $(S_{n+1},S'_{n+1})$ and $f_{n+1}$, we use the back-and-forth method:
  First, we find a suitable image, say $s'$, for $s_{n+1}$ in $S'$ (this is the `forth' part). If the image of $s_{n+1}$ was already determined in a previous step, i.e., $s_{n+1} \in S_n$, then we skip this part and set $s'=f_n(s_{n+1})$.
  Next, we find a suitable preimage, say $t$, for $s'_{n+1}$ in $S$ (this is the `back' part). As before, if the preimage of $s'_{n+1}$ was already determined, i.e., $s'_{n+1} \in S'_n \cup \{s'\}$, then we skip this part and set $t$ to be this preimage.
  We then define $S_{n+1} = S_n \cup \{s_{n+1},t\}$, $S'_{n+1} = S'_n \cup \{s'_{n+1},s'\}$ and $f_{n+1} = f_n \cup \{ (s_{n+1},s'), (t,s'_{n+1}) \}$. In this way, the first three properties above will automatically hold, so that `suitable' refers solely to satisfying the fourth property.
  The two parts are analogous to one another, and so we only explain the first part.

  Denote $s := s_{n+1}$ and suppose that $s \notin S_n$. We wish to find a suitable image $s' \in S'$ for $s$.
  Thus we need an element of $S'$ such that $f_n \cup \{(s,s')\}$ is an extended step-isometry and an isomorphism between $G[S_n \cup \{s\}]$ and $G'[S'_n \cup \{s'\}]$. Let us first describe those candidates which ensure the former condition.

  Consider the values
  \begin{align*}
    a &:= \max \{ r_{f_n(u)} : u \in S_n\text{ and }r_u < r_s \} , \\
    b &:= \min \{ r_{f_n(u)} : u \in S_n\text{ and }r_u > r_s \} \cup \{ \tfrac 1m \} .
  \end{align*}
  Note that the set defining $a$ is not empty, since $0\in S_n$ and $0=r_0 < r_s$.
  Since $f_n$ is an extended step-isometry, any element in the set defining $a$ is strictly smaller than any element in the set defining $b$, and hence $a<b$. Denote
  \[ I := \frac{q_s}m + (a,b) .\]
  Observe that, since $S$ is integer-distance-free, $I \cap S'$ is precisely the set of $s' \in S'$ such that $f_n \cup \{(s,s')\}$ is an extended step-isometry.

  It remains only to show that $I \cap S'$ contains an element $s'$ such that $f_n \cup \{(s,s')\}$ is an isomorphism between $G[S_n \cup \{s\}]$ and $G'[S'_n \cup \{s'\}]$.
  Since $f_n$ is an isomorphism between $G[S_n]$ and $G'[S'_n]$, it suffices to show that $I \cap S'$ contains an element $s'$ which is adjacent to every element in $f_n(N(s) \cap S_n)$ and to no other element in $S'_n$.
  This will follow from the fact that $G'$ is \gec\ and has unit threshold once we show that $f_n(J)$ is contained in the open arc $(z-1,z+1)$ for some $z \in I\cap S'$, where $J=N(s) \cap S_n$.
  We will in fact show that this holds for every $z\in I$.

  Note that, since $G$ has unit threshold, $J\subset N(s) \subset (s-1,s+1)$.
  Fix $z\in I$, let $x \in J$ and denote $y:=f_n(x)$.
  We need to show that $y \in (z-1,z+1)$.
  Recall that $q_y=q_x$ and $q_z = q_s$.
  Since $x<s+1$, we have $q_x \le q_s+m$ (if $s>L-1$, so that $q_s+m \ge \ell$, this should be interpreted modulo $\ell$).
  \begin{itemize}[nosep]
  \item If $q_x \neq q_s+m$ then also $q_y < q_z+m$ which implies
    $y<z+1$.
  \item If $q_x = q_s+m$, more care is needed.
    Since $x-s = (q_x-q_s)/m + r_x-r_s < 1$ it follows that $r_x < r_s$.
    Let $u,v\in S_n$ be the points with $r_{f_n(u)}=a$ and $r_{f_n(v)}=b$.
    By the definition of $a$ and $b$, we cannot have $r_x\in(r_u,r_s)$, so $r_x\le r_u$ and so $r_y \le a$.
    Therefore,
    \[ y = \frac{q_y}{m} + r_y \le \frac{q_s+m}{m} + a < 1+z. \]
  \end{itemize}

  The other direction (that $y>z-1$) is almost identical. Either $q_x>q_s-m$ and all is well, or else $q_x=q_s-m$ and then $r_x\ge r_v$ and $r_y\ge b$, concluding as above.
  A small difference is that if $b=1/m$, there are no points in $S_n$ with $r_x>r_s$, so the latter case is impossible.
\end{proof}

For the most part, the $L=\infty$ case handled in \cite{bonato2011infinite} (corresponding to the metric space $(\R,|\cdot|)$) behaves similarly to the case of rational $L>2$.
In particular, both have the Rado property and every dense countable set which is integer-distance-free is Rado.
However, for sets which are not integer-distance-free, there are subtle differences between the two cases.
As an example, we contrast the set of rationals in $\R$ and in $\bbS_L$.

\begin{prop}
  The set of rationals $\Q$ is strongly non-Rado in $(\R,|\cdot|)$.
\end{prop}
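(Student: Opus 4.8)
The plan is to mimic the rigidity-then-union-bound strategy used for irrational $L$ (\cref{cor:recover-all-distances}), but since on the line $(\R,|\cdot|)=\bbS_\infty$ we can only recover \emph{integer parts} of distances from \eqref{eq:floor-dist} and not exact distances, I will instead extract the integer-distance structure that is special to $\Q$. By \cref{lem:gec} it suffices to work deterministically: fix $G\in\cG_{\infty,\Q}$ and $G'\in\cG_{\infty,\Q}$ (independent) and show no graph isomorphism exists. The first task is to recover, purely from the graph, the relation $R=\{(u,v):\|u-v\|\in\Z\}$, equivalently the partition of $\Q$ into cosets of $\Z$. Since $\Q$ is \emph{not} integer-distance-free, these cosets are genuine, and they are exactly the obstruction that the rational-circle construction of \cref{prop:isomorphic} relied on being absent.

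To recover the cosets I will first recover the linear order on $\Q$ up to global reversal, using the recoverable ``intervals'' $\{v:\dist_G(u,v)\le k\}=\{v:\|u-v\|<k\}$ for $k\ge 2$ (from \eqref{eq:floor-dist}) and the interval-recovery ideas of Section~4. Given the order, I claim the translation maps are recoverable: for $k\ge2$ one has $u+k=\min\{w>u:\dist_G(u,w)\ge k+1\}$, since $\dist_G(u,w)\ge k+1\iff\|u-w\|\ge k$ and this minimum is attained exactly at $u+k\in\Q$. (Crucially this fails for $k=1$, because nonadjacent nearby points also have $\dist_G=2$; the trick is to avoid the threshold ambiguity by only using $k\ge2$.) Composing the recovered bijection $u\mapsto u+3$ with the inverse of $u\mapsto u+2$ yields $u\mapsto u+1$, and hence $R$ and the coset partition. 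Any isomorphism $f\colon G\to G'$ therefore preserves $R$, commutes with $u\mapsto u+1$, maps each $\Z$-coset isometrically onto a $\Z$-coset, and is order-preserving up to reversal. The orientation choice and the (irrelevant) labelling of cosets contribute only countably many cases.

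Now the randomness kills this (still uncountable) family of candidate $f$ via a single robust invariant. Fix two cosets $C_0=\Z$ and $C_\alpha=\alpha+\Z$ with $\alpha\in(0,1)\cap\Q$. Between them the \emph{possible} edges (pairs at distance $<1$) are exactly $\{n,\alpha+n\}$ (distance $\alpha$) and $\{n+1,\alpha+n\}$ (distance $1-\alpha$) for $n\in\Z$, so the bipartite edge-pattern $P$ of $G$ between $C_0$ and $C_\alpha$ is an \iid\ Bernoulli($p$) configuration on a bi-infinite family of edges. Because $f$ commutes with $+1$ and respects cosets, an isomorphism would force $P$ to equal, \emph{up to an integer shift and a possible reflection}, the bipartite pattern of $G'$ between some pair $(D,D')$ of $G'$-cosets. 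Since $G$ and $G'$ are independent, for each fixed choice of $(D,D')$, shift, and reflection the probability of exact equality is $\prod_{\text{edges}}\bigl(p p'+(1-p)(1-p')\bigr)=0$, as $\rho:=pp'+(1-p)(1-p')<1$ and there are infinitely many edges. Taking a union over the countably many coset-pairs, integer shifts, and the two reflections keeps this probability $0$. Hence almost surely the pattern $P$ is realized by no coset-pair of $G'$, so no isomorphism exists, and $\Q$ is strongly non-Rado.

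The main obstacle is the recovery step: detecting \emph{exact} integer distances (equivalently the $\Z$-cosets) from the graph, since the graph a priori only reveals integer parts of distances and the unit threshold is blurred by the random edges. This is resolved by the ``$u+k=\min\{w>u:\dist_G(u,w)\ge k+1\}$'' characterization for $k\ge2$, which sidesteps the threshold ambiguity; everything after that is a clean countable-union argument over a single bi-infinite \iid\ pattern, exploiting that distinct cosets of $G'$ carry \emph{independent} edge randomness.
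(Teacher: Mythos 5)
Your second half---the coset pattern-matching argument---is essentially the paper's own finish: the paper likewise reduces to the statement that any isomorphism satisfies $f(x+n)=f(x)+\eps n$ for a single global sign $\eps$, and then compares the i.i.d.\ sequence $A=\{n \in \Z : n \sim \tfrac12 + n \text{ in } G\}$ against the countably many sequences $B^{\pm}_{u,v}$ read off from the independent graph $G'$, getting probability zero for each fixed $(u,v,\pm)$ and concluding by a countable union. So the real comparison is in how the rigidity statement (isomorphisms preserve $\Z$-cosets and commute with unit translation up to sign) is established, and there your write-up has a genuine gap.

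Your route is: recover the linear order on $\Q$ up to reversal, then recover $u\mapsto u+k$ for $k\ge2$ via $u+k=\min\{w>u:\dist_G(u,w)\ge k+1\}$, then compose to get $u\mapsto u\pm1$. The min-characterization is correct once the order is available, and it rightly exploits that $\Q$ is closed under integer translation (so the infimum is attained in the vertex set---precisely the failure of integer-distance-freeness). But the order recovery, which you dispatch with ``interval-recovery ideas of Section~4,'' is where the entire content lies, and it does \emph{not} follow from \eqref{eq:floor-dist} alone: the graph balls $\{v:\dist_G(u,v)\le k\}$ are symmetric about $u$, so floor-distances can neither distinguish $u+k$ from $u-k$ nor order two points having the same floor-distance from $u$. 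In other words, the step you present as the resolution of the main obstacle (the min formula) presupposes the order, and the order is the unproven step. It is fillable---\cref{lem:recover-arc} does hold for $L=\infty$ (on the line the warm-up intersection-of-balls argument suffices), betweenness is then recoverable via uni-directional (monotone) paths built from \gec\ as in the paper's Section~4, and betweenness determines the order up to reversal---but that is a genuine argument you would need to write out. The paper circumvents all of this with a purely local trick: $x$ is the \emph{unique} vertex at graph-distance $3$ from both $x-2$ and $x+2$, and conversely the existence of a unique such vertex for a pair $(u,v)$ forces $|u-v|=4$; this yields $\{f(x\pm2)\}=\{f(x)\pm2\}$ directly, with no order, no minima over infinite sets, and only a short sign-patching step afterwards. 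Net assessment: your plan is workable and its probabilistic half matches the paper, but as written the coset-recovery step is asserted rather than proved, and it is the crux.
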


This statement -- as well as an analogous statement for $(\R^d,\ell_\infty)$ -- appeared in the proof of Theorem~2(i) in~\cite{balister2018random}. We give a proof for completeness.

\begin{proof}
  Let $G,G' \in \cG_{\infty,\Q}$.
  Let us first show that any isomorphism $f \colon \Q \to \Q$ from $G$ to $G'$ must map $x+\Z$ to $f(x)+\Z$.
  More specifically, there exists $\eps\in\{\pm1\}$ such that $f(x+n) = f(x) + \eps n$ for all $x\in\Q$ and $n \in \Z$.
  To see this, observe that $x$ is the unique vertex in $G$ which is at graph-distance 3 in $G$ from both $x-2$ and $x+2$ (by~\eqref{eq:floor-dist}, points at distance $3$ from $x$ are precisely those in $(x-3,x-2]\cup [x+2,x+3)$).
  Moreover, if for some $u$ and $v$ there is a unique vertex $x$ with $\dist_G(x,u)=\dist_G(x,v)=3$, then necessarily $|u-v|=4$ (otherwise there would be no such vertex or infinitely many).
  This implies that $\{f(x\pm2)\} = \{f(x)\pm 2\}$.  A similar argument shows that $\{f(x\pm k)\} = \{f(x) \pm k\}$ for any integer $k\ge 2$ and any $x$.
  It is easy to deduce from this that for any $x$ there is $\eps_x\in\{\pm1\}$ such that $f(x+n) = f(x) + \eps_x n$ for all $n$.
  For any $x,y\in[0,1)$, we have by~\eqref{eq:floor-dist} that $\dist_{G'}(f(x+n),f(y+n)) = \dist_G(x+n,y+n) \le 2$ and $\dist_{G'}(f(x)+n,f(y)-n) \ge |2n|-1$.
  Thus, $\eps_x=\eps_y$ for all $x$ and $y$, which yields our claim.
  
  Now let $G,G' \in \cG_{\infty,\Q,p}$ be independent and consider two nearby points in $G$, say $0$ and $\frac12$.
  For $u,v \in \Q$, consider the sets
  \begin{align*}
    A & := \{\text{$n \in \Z : n$ and $\tfrac12+n$ are adjacent in $G$}\}, \\
    B^\pm_{u,v} & := \{\text{$n \in \Z : u\pm n$ and $v\pm n$ are adjacent in $G'$}\}.
  \end{align*}
  By \cref{lem:gec} and the above, almost surely, if $G \cong G'$ then $A=B^+_{u,v}$ or $A=B^-_{u,v}$ for some $u$ and $v$ (namely, for $u=f(0)$ and $v=f(\frac12)$, where $f$ is an isomorphism from $G$ to $G'$).
  However, since $B^{\pm}_{u,v}$ is a sequence of independent $\text{Ber}(p)$ random variables, independent also of $A$,
  this clearly has probability zero for any fixed $u$ and $v$.
  Since there are countably many choices for $u$ and $v$, we deduce that $\Pr(G \cong G')=0$.
\end{proof}

\begin{prop}
  Let $L>2$ be rational. Then $\Q \cap \bbS_L$ is Rado.
\end{prop}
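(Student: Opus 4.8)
The plan is to reduce, via \cref{lem:gec}, to proving that two independent samples $G=G_{L,S,p}$ and $G'=G_{L,S,p'}$ with $S=\Q\cap\bbS_L$ are almost surely isomorphic. Writing $L=\ell/m$ in lowest terms, the preimage of $S$ under $\R\to\bbS_L$ is exactly $\Q\subset\R$, and I would work on this universal cover: $G$ lifts to an $L$-periodic graph $\tilde G$ on $\Q$ (with $x\sim y$ iff $\dist(x-y,L\Z)<1$ and the edge is present), and a fiber-respecting circle isomorphism corresponds to an increasing bijection $\tilde f\colon\Q\to\Q$ commuting with translation by $\tfrac1m$. Here the \emph{fibers} are the cosets of $\tfrac1m\Z$, each carrying one vertex per residue $q\in\{0,\dots,\ell-1\}$; passing to the cover is what makes the fiber order honestly linear and absorbs the residue ``twist'' one meets going once around $\bbS_L$. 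The edges split into within-fiber and cross-fiber edges, and the governing dichotomy is that within-fiber distances are exactly multiples of $\tfrac1m$ (rigid, not perturbable), while distinct fibers are never at integer distance, so cross-fiber adjacencies are flexible.

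Two facts drive the argument. First, the potential within-fiber edges form a fixed circulant $C$ on $\Z/\ell\Z$ (residues at cyclic distance less than $m$), so each within-fiber graph is an element $H_r\in\{0,1\}^{E(C)}$; and whether a residue-$q$ vertex of one fiber may be adjacent to a residue-$q'$ vertex of another depends only on $q'-q$ and on the order of the two fibers, never on the precise positions — this is exactly where $\|u-v\|\notin\Z$ for $u,v$ in distinct fibers is used. Consequently an order-preserving, residue-respecting correspondence of fibers automatically matches the cross-fiber \emph{closeness} pattern, leaving only the actual adjacencies to be matched. Second, I would prove a richness statement: since $p,p'\in(0,1)$, the patterns $H_r$ are independent across fibers with full support on $\{0,1\}^{E(C)}$, and the cross-fiber edges to any fixed finite family of fibers are independent of them and independent across candidate fibers. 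Hence, almost surely, for every finite family of placed fibers, every open arc of positions, every target within-pattern, and every closeness-consistent target cross-pattern, infinitely many fibers in that arc realize all of these at once.

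Granting richness, I would construct $\tilde f$ by a back-and-forth over fibers. Maintaining a finite partial correspondence that is increasing, commutes with $+\tfrac1m$, matches within-patterns ($H'_{\rho(r)}=H_r$), and matches all cross-adjacencies between placed fibers, I extend it one fiber at a time: to serve a new fiber of $\tilde G$ I use richness to locate a fiber of $\tilde G'$ in the forced order-interval whose within-pattern equals $H_r$ and whose cross-adjacencies to every already-placed image fiber agree with those prescribed by $\tilde G$; the `back' half is symmetric and uses richness of $\tilde G$. The union is an increasing $+\tfrac1m$-commuting bijection; it is a graph isomorphism because within-fiber edges agree by pattern equality, cross-fiber edges by the matched adjacencies, and every non-close pair is a non-edge in both graphs by unit threshold. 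Descending to $\bbS_L$ gives the desired isomorphism $G\cong G'$.

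The main obstacle is the step absent from the integer-distance-free case: within-fiber edges cannot be perturbed, so \gec\ alone no longer suffices and the proof must become genuinely almost-sure. Its heart is the richness lemma together with the demand, inside the back-and-forth, to match cross-fiber adjacencies to \emph{all} previously placed fibers simultaneously; this works only because distinct candidate fibers involve disjoint (hence independent) sets of edges, combined with the density of the rationals and the full support coming from $p,p'\in(0,1)$. A secondary, more bookkeeping-like difficulty — reconciling the cyclic order of fibers with the residue labeling, which shifts by one upon a full revolution — is precisely what the passage to the universal cover is designed to neutralize, turning the sought map into an ordinary increasing bijection of $\Q$.
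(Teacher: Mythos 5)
Your proof is correct and follows essentially the same route as the paper's: both decompose $\Q\cap\bbS_L$ into the equivalence classes (your ``fibers'') of the relation $\|u-v\|\in\frac{1}{m}\Z$, map whole classes at a time by a back-and-forth that preserves the cyclic order of representatives and the $+\frac{1}{m}$ structure, and use positive probability together with independence across candidate classes to find, almost surely, an image class whose within-class pattern and cross-class adjacencies match. Your universal-cover formulation and the up-front richness lemma (quantified over all countably many finite configurations, so that the subsequent back-and-forth is deterministic) are organizational variants of the paper's extended step-isometries and its step-by-step almost-sure existence claim, with the benefit of cleanly separating the probabilistic input from the combinatorial construction.
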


\begin{proof}
  The proof is essentially the same as for integer-distance-free $S$ (\cref{prop:isomorphic}), with a small twist --
  instead of finding a suitable image for a single vertex $s$ at a time, we do so for several vertices at a time, those in a certain equivalence class of $s$.

  Let $L=\frac \ell m$ be irreducible. Say that $u,v \in \Q \cap \bbS_L$ are equivalent if $\|u-v\|$ is a multiple of $\frac1m$ and write $[u]$ for the equivalent class of $u$.
  Note that $|[u]|=\ell$ and that $[u]=v+\frac1m\{0,\dots,\ell-1\}$ for some $v \in [0,\frac 1m)$.
  We also write $[U] := \bigcup_{u \in U}[u]$.

  Let $S=\{s_n\}_{n \ge 0}$ be an enumeration of representatives of $\Q \cap \bbS_L$, where $s_0=0$ and $s_n \in (0,\frac 1m)$ for $n \ge 1$.
  The isomorphism $f \colon \Q \cap \bbS_L \to \Q \cap \bbS_L$ between $G$ and $G'$ that we aim to construct will be defined completely by a permutation of $S$ by requiring that
  \begin{equation}\label{eq:f-class}
    f(s + \tfrac im) = f(s) + \tfrac im \qquad\text{for all }s \in S\text{ and }0 \le i \le \ell -1 .
  \end{equation}
  We shall also require that $f(0)=0$.
  Thus, we only need to prescribe the value of $f$ at one element in each equivalence class, as the rest are then determined by this.

Suppose that we have already constructed a partial permutation of $S$ which gives rise through~\eqref{eq:f-class} to a function $f$ which is an extended step-isometry and an isomorphism of the induced subgraphs.
That is, $f$ is a bijection between some $S_n \subset S$ and $S'_n \subset S$, and it extends to a bijection from $[S_n]$ to $[S'_n]$ by~\eqref{eq:f-class}.
To proceed with the `forth' step of the back-and-forth method, we choose some $s \in S \setminus S_n$. We need to find an image $s' \in S \setminus S'_n$ for $s$ such that $f \cup \{(s + \frac im,s' + \frac im) : 0 \le i < \ell \}$ is an extended step-isometry and an isomorphism between $G[[S_n] \cup [s]]$ and $G'[[S'_n] \cup [s']]$.
A similar argument to the one given in the proof of \cref{prop:isomorphic} shows that there is an open interval of candidates in $(0,\frac 1m)$ which satisfy the extended step-isometry requirement. Since each such candidate satisfies the isomorphism requirement with positive (constant) probability, and since these events are independent, there almost surely exists a suitable choice for $s$.
Other than this difference, the proof proceeds identically.
\end{proof}

\begin{remark}\label{rem:non-rado}
  When $L>2$ is rational, one can also construct dense countable subsets of $\bbS_L$ which are neither Rado nor strongly non-Rado,
  so that the probability to have an isomorphism is neither zero nor one.
  One such way is to take a dense integer-distance-free set and add the points of $\{ \frac i{2m} : 0 \le i < 2\ell \}$.
  We omit the details.
  See~\cite[Theorem~2]{balister2018random} for a similar statement when the underlying metric space is a finite-dimensional normed space.
\end{remark}

\paragraph{Acknowledgements.}
This work was supported in part by NSERC of Canada.

\bibliographystyle{amsplain}
%\nocite{*}
\bibliography{library}

\providecommand{\bysame}{\leavevmode\hbox to3em{\hrulefill}\thinspace}
\providecommand{\MR}{\relax\ifhmode\unskip\space\fi MR }
% \MRhref is called by the amsart/book/proc definition of \MR.
\providecommand{\MRhref}[2]{%
  \href{http://www.ams.org/mathscinet-getitem?mr=#1}{#2}
}
\providecommand{\href}[2]{#2}
\begin{thebibliography}{1}

\bibitem{balister2018random}
Paul Balister, B{\'e}la Bollob{\'a}s, Karen Gunderson, Imre Leader, and Mark
  Walters, \emph{Random geometric graphs and isometries of normed spaces},
  Transactions of the American Mathematical Society \textbf{370} (2018),
  no.~10, 7361--7389.

\bibitem{bonato2011infinite}
Anthony Bonato and Jeannette Janssen, \emph{Infinite random geometric graphs},
  Annals of Combinatorics \textbf{15} (2011), no.~4, 597--617.

\bibitem{bonato2012infinite}
\bysame, \emph{Infinite random geometric graphs from the hexagonal metric},
  International Workshop on Combinatorial Algorithms, Springer, 2012,
  pp.~6--19.

\bibitem{bonato2013properties}
\bysame, \emph{Infinite random graphs and properties of metrics}, Recent Trends
  in Combinatorics, Springer, 2016, pp.~257--273.

\bibitem{bonato2018geometric}
Anthony Bonato, Jeannette Janssen, and Anthony Quas, \emph{Geometric random
  graphs and {R}ado sets of continuous functions}, arXiv:1803.00122, 2018.

\bibitem{bonato2019geometric}
\bysame, \emph{Geometric random graphs and {R}ado sets in sequence spaces},
  European Journal of Combinatorics \textbf{79} (2019), 1--14.

\bibitem{cameron1997random}
Peter~J Cameron, \emph{The random graph}, The Mathematics of Paul Erd{\"o}s II,
  Springer, 1997, pp.~333--351.

\bibitem{erdHos1963asymmetric}
Paul Erd{\H{o}}s and Alfr{\'e}d R{\'e}nyi, \emph{Asymmetric graphs}, Acta
  Mathematica Hungarica \textbf{14} (1963), no.~3-4, 295--315.

\bibitem{rado1964universal}
Richard Rado, \emph{Universal graphs and universal functions}, Acta Arithmetica
  \textbf{4} (1964), no.~9, 331--340.

\end{thebibliography}

\end{document}